\documentclass[a4paper,11pt]{article}

\usepackage{amsmath,amssymb,amsfonts}
\usepackage{color}
\usepackage{hyperref}

\usepackage[utf8]{inputenc}

\setlength{\textwidth}{15cm}
\setlength{\textheight}{22.5cm}
\setlength{\topmargin}{-1cm}
\setlength{\oddsidemargin}{0.5cm}

\newcommand{\R}{\mathbb R}
\newcommand{\C}{\mathbb{C}}
\newcommand{\Z}{\mathbb{Z}}
\newcommand{\N}{\mathbb{N}}
\newcommand{\Q}{\mathbb{Q}}
\newcommand{\A}{\mathbb{A}}
\newcommand{\K}{\mathbb{K}}

\newcommand{\conv}{\operatorname{conv}}

\newcommand{\supp}{\hbox{supp}}

\newcommand{\cA}{\mathcal{A}}
\newcommand{\cB}{\mathcal{B}}
\newcommand{\cD}{\mathcal{D}}
\newcommand{\cE}{\mathcal{E}}

\newcommand{\cP}{\mathcal{P}}
\newcommand{\cL}{\mathcal{L}}
\newcommand{\cZ}{\mathcal{Z}}
\newcommand{\cQ}{\mathcal{Q}}
\newcommand{\cR}{\mathcal{R}}

\newcommand{\cW}{\mathcal{W}}

\newcommand{\bff}{{\boldsymbol{f}}}
\newcommand{\bfx}{{\boldsymbol{x}}}
\newcommand{\bfp}{{\boldsymbol{p}}}
\newcommand{\bfq}{{\boldsymbol{q}}}
\newcommand{\bfv}{{\boldsymbol{v}}}
\newcommand{\bfw}{{\boldsymbol{w}}}

\newcommand{\bfS}{{\boldsymbol{S}}}
\newcommand{\bfF}{{\boldsymbol{F}}}

\newcommand{\Rem}{\hbox{Rem}}

\newcommand{\ord}{\operatorname{ord}}

\newtheorem{proposition}{Proposition}
\newtheorem{theorem}[proposition]{Theorem}

\newtheorem{remark}[proposition]{Remark}

\newtheorem{lemma}[proposition]{Lemma}

\newenvironment{proof}{
\trivlist \item[\hskip \labelsep\mbox{\it Proof.
}]}{\hfill\mbox{$\square$}
\endtrivlist}

\title{Sparse systems and algorithmic equidimensional decomposition\footnote{Partially supported by Universidad de Buenos Aires (UBACYT 20020190100116BA) and  CONICET (PIP 2021-2023 GI 11220200101015CO), Argentina.}}
\author{Mar\'ia Isabel Herrero{$^{1}$}, Gabriela Jeronimo{$^{2, 3, 4}$} and Juan Sabia{$^{3, 4}$}}
\date{}

\begin{document}

\maketitle
\begin{center}
\begin{minipage}{13cm}
\noindent {\small$^1$ Universidad Torcuato Di Tella. Departamento de Matem\'aticas y Estad\'istica. Buenos Aires, Argentina.}

\noindent {\small $^2$ Universidad de Buenos Aires. Facultad de Ciencias Exactas y Naturales. Departamento de Matem\'atica. Buenos Aires, Argentina.}

\noindent {\small $^3$  CONICET -- Universidad de Buenos Aires. Instituto de Investigaciones Matem\'aticas ``Luis A. Santal\'o'' (IMAS). Buenos Aires, Argentina.}

\noindent {\small $^4$ Universidad de Buenos Aires. Ciclo B\'asico Com\'un.  Departamento de Ciencias Exactas. Buenos Aires, Argentina.}
\end{minipage}
\end{center}

\medskip

\begin{flushright}
\emph{In memory of Joos Heintz, our mentor.}
\end{flushright}

\medskip

\begin{abstract}
    We present a new probabilistic algorithm that characterizes the equidimensional components of the affine algebraic variety defined by an arbitrary sparse polynomial system with prescribed supports. For each equidimensional component, the algorithm computes a witness set, namely a finite set obtained by intersecting the component with a generic linear variety of complementary dimension. The complexity of the algorithm is polynomial in combinatorial invariants associated to the supports of the polynomials involved.
\end{abstract}

\section{Introduction}

Due to the computational challenges involved in solving general polynomial equation systems,
which typically require substantial calculations, the study of polynomial systems with specific structure arising in applications has gained significant attention in the last decades.
In particular, systems of \emph{sparse} polynomials (that is, polynomials with nonzero coefficients only at  prescribed sets of monomials called their \emph{supports}) have become a central topic.

The foundational work of Bernstein \cite{Bernstein1975}, Kushnirenko \cite{Kushnirenko1976}, and Khovanskii \cite{Khovanskii1978} established that the number of isolated solutions in $(\C^*)^n$ of a polynomial system with $n$
equations in $n$ variables is bounded by a combinatorial invariant, known as the \emph{mixed volume}, which depends solely on the support sets of the polynomials involved.

This result has led to the development of algorithms for computing the isolated solutions of sparse polynomial systems, both numerically and symbolically (see, for instance, \cite{Verscheldeetal1994}, \cite{HS1995}, \cite{LiWang1996}, \cite{HS1997}, \cite{EV1999}, \cite{Rojas2003}, \cite{JMSW2009}, \cite{HJS2010}, \cite{DHJLLS2019}, \cite{BRSY2021}).

When solving arbitrary polynomial systems, one may be interested in characterizing not only the isolated solutions but also the components of higher dimensions of the affine variety defined by the system (for the best known complexity bounds for solving zero-dimensional systems see \cite{vdHL2021}).
Early work in this direction was presented in \cite{ChistovGrigoriev1983} and \cite{GiustiHeintz1991},
where the authors introduced new algorithms to compute the \emph{equidimensional decomposition} of an affine variety. Later progress on this topic can be found in \cite{ElkadiMourrain1999}, \cite{Lecerf2000}, \cite{JeronimoSabia2002}, \cite{Lecerf2003} and \cite{Jeronimoetal2004}, with shorter running time algorithms relying on probabilistic methods. More recently, a new equidimensional decomposition algorithm whose efficiency has been shown in practice was presented in \cite{Ederetal2023}.

The equidimensional decomposition problem has also been studied in the sparse setting in \cite{HJS2013}, both from the theoretical and the algorithmic points of view.  For \emph{generic} sparse systems with prescribed supports, the paper presents combinatorial conditions characterizing the equidimensional components of the algebraic variety defined by the system in the affine space, which are applied to design an efficient probabilistic algorithm to compute them. Also, some advances towards the algorithmic equidimensional decomposition for sparse systems with arbitrary coefficients were made in \cite{HJS2013}, but the problem was not completely solved, since the proposed algorithm provides partial information on the equidimensional components of the variety but it does not give a full description of them.

In this paper, we design a symbolic probabilistic algorithm that characterizes completely the equidimensional components of the affine variety defined by an \emph{arbitrary} sparse system.
This characterization is given by means of \emph{witness sets} of the equidimensional components of the variety,  which is a well-known approach in numerical algebraic geometry (see, for instance, \cite[Chapter 13]{SommeseWampler2005}). For an equidimensional variety $W\subset \A^n$ of dimension $k$, a witness set of $W$ is a finite set of points characterizing $W$ obtained by intersecting $W$ with a generic linear variety of co-dimension $k$.  Similarly to the notion of lifting fibers in the symbolic computation framework (see \cite{Lecerf2003}), witness sets enable one to perform geometric computations related to the variety (see also \cite{Sottile2020}).

Our algorithm computes, from a given sparse system $\bff = (f_1,\dots, f_m)$ of polynomials in $\Q[x_1,\dots, x_n]$, a family of finite sets $(\cP_k)_{0\le k \le n-1}$ such that, for every $k$, $\cP_k$ is a witness set of the equidimensional component of dimension $k$  of the variety defined by $\bff$ in $\A^n$. These finite sets are described by means of \emph{geometric resolutions} (see Section \ref{subsec:geomres} for a precise definition), a usual way of representing zero-dimensional varieties in the symbolic computation framework. In the particular case where all polynomials are supported in a set $\cA$, our main result, which follows straightforwardly from the more precise statement given in Theorem \ref{thm: main}, is:

 \bigskip
 \noindent \textbf{Theorem.} \emph{Let $f_1,\dots, f_m \in \Q[x_1,\dots, x_n]$ be polynomials supported on a finite set $\cA \subset (\Z_{\ge 0})^n$ that define an algebraic variety $V\subset\A^n$. Let $Q\subset \R^n$ be the convex hull of $\cA \cup \{0, e_1,\dots, e_n\}$, where $e_i$ is the $i$th vector of the canonical basis of $\R^n$. There is a probabilistic algorithm  that, for every $k=0, \dots, n-1$, computes a geometric resolution that represents a witness set of the equidimensional component of dimension $k$ of $V$.
The complexity of the algorithm is polynomial in $n$ and $\cD = n! \textrm{vol}_n(Q)$ and linear in $|\cA|$. }

\bigskip

Our approach is based on deformation techniques, which are widely used in numerical algebraic geometry for solving zero-dimensional polynomial systems (see, for instance, \cite{SommeseWampler2005}, \cite{Batesetal2013}). Homotopic deformation methods have also been applied in symbolic algorithms. In \cite{GiHe1993}, the authors introduced a new symbolic procedure for the computation of isolated points of algebraic varieties  that  significantly improved the previous complexity bounds. Combined with a symbolic Newton-Hensel lifting, these techniques lead to more efficient symbolic algorithms also solving some related elimination problems (see, for example, \cite{Giestieltal1998}, \cite{Heintzetal2000}, \cite{GLS01}, \cite{Schost2003}). Some algorithms for equidimensional decomposition of varieties also rely on these methods (see \cite{Lecerf2000}, \cite{JeronimoSabia2002}, \cite{Lecerf2003}, \cite{Jeronimoetal2004}). Algorithms designed specifically for the sparse polynomial setting are based on \emph{polyhedral deformations}, which take into account the Newton polytopes of the input polynomials (see \cite{Verscheldeetal1994}, \cite{HS1995}, \cite{HS1997}, \cite{JMSW2009}, \cite{HJS2010}, \cite{HJS2013}).

In this paper, we apply homotopic deformations together with polyhedral deformation-based subroutines (mainly from \cite{JMSW2009} and \cite{HJS2013}) to obtain a probabilistic symbolic algorithm with a complexity bound depending on combinatorial parameters (mixed volumes) for characterizing the equidimensional components of affine varieties defined by arbitrary sparse polynomial systems. To this end, we first prove some theoretical results on varieties defined by particular homotopic deformations which we then use in our algorithm. Relying on these results, we design a new probabilistic subroutine that given a sparse system of polynomials defining a variety $V\subset \A^n$ and a finite set $\cP\subset V$, computes for $k=0,\dots, n-1$, the set of all points in $\cP$ that lie in the equidimensional component of $V$ of dimension $k$ and in no component of $V$ of larger dimension.
This new subroutine is the key ingredient that enables us to obtain witness sets for the equidimensional components of the variety from witness supersets for them (c.f. \cite{HJS2013}). Our complexity bounds are \emph{polynomial} in terms of \emph{mixed volumes}, which allows one to obtain better running-time estimates when dealing with structured polynomial systems.

The paper is organized as follows: in Section \ref{sec:preliminaries}, we recall the basic notions on sparse polynomial systems, equidimensional decomposition of affine varieties and state the complexities of several basic algorithmic computations with polynomials and matrices we will use. Section \ref{sec:theoretical} is devoted to proving the main theoretical results of the paper.  In Section \ref{sec:algorithms}, we first discuss the notion of a geometric resolution from an algorithmic point of view, and finally, we present our algorithms and we prove their correctness and complexity bounds.

\section{Preliminaries}\label{sec:preliminaries}

\subsection{Sparse polynomial systems} \label{subsec:sparse}

Let $x=(x_1,\dots, x_n)$ be a family of indeterminates over a field $\K$ and $\K[ x] = \K[x_1,\dots, x_n]$ be the ring of polynomials in the variables $x$ with coefficients in $\K$. Throughout this paper, we will work with fields with characteristic $0$, mainly with $\K= \Q$ or $\K = \Q(t)$ (the field of fractions of the polynomial ring in one variable with coefficients in $\Q$).

For $a\in (\Z_{ \ge 0})^n$, we write $x^a = x_1^{a_1}\dots x_n^{a_n}$ for the corresponding monomial in the variables $x_1, \dots, x_n$.

Given a family $\cA = (\cA_1, \dots, \cA_m)$ of finite subsets of $(\Z_{\ge 0})^n$, a \emph{sparse polynomial system supported on $\cA$} with coefficients in $\K$ is a family $\bff= (f_1,\dots, f_m)$ of polynomials in $\K[ x_1,\dots, x_n]$ of the form
\[ f_j(x) = \sum_{ a\in \cA_j} c_{ja} x^a, \ c_{ja} \in \K \mbox{ for all } a\in\cA_j, \ j=1,\dots, m; \]
that is, the only monomials that can appear in $f_j$ with a nonzero coefficient are those with exponents given by the points in $\cA_j$.

Assume $m=n$. We denote by $MV_n(\cA_1, \dots, \cA_n)$, or simply $MV_n(\cA)$, the \emph{mixed volume} of the family of lattice polytopes $\conv(\cA_1), \dots, \conv(\cA_n)$, the convex hulls in $\R^n$ of $\cA_1, \dots, \cA_n$. For a definition of mixed volume, we refer the reader to \cite[Chapter 7]{CLO2}. The mixed volume of $n$ polytopes in $\R^n$ is a symmetric function, linear in each coordinate with respect to the Minkowski sum, and monotone with respect to inclusion, that is,
$MV_n(\cA_1, \dots, \cA_n) \le MV_n(\cB_1,\dots, \cB_n)$ if $\cA_j\subseteq \cB_j$ for every $1\le j \le n$. If $\conv(\cA_1)=\dots = \conv(\cA_n) = C$, the equality $MV_n(\cA_1, \dots, \cA_n) = n! \hbox{vol}_n(C)$ holds.

The BKK theorem (\cite{Bernstein1975}, \cite{Khovanskii1978}, \cite{Kushnirenko1976}) states that the mixed volume $MV_n(\cA_1,\dots, \cA_n)$ is equal to the number of common zeros in $(\C^*)^n$ of a generic polynomial system supported on $\cA_1,\dots, \cA_n$ with coefficients in $\C$; furthermore, it is an upper bound for the number of isolated solutions in $(\C^*)^n$ of a polynomial system supported on $\cA_1,\dots, \cA_n$ with \emph{arbitrary} complex coefficients.

\subsection{Affine varieties, equidimensional decomposition and witness sets}

A family of polynomials $\bff= (f_1, \dots, f_m) $ in $\K[x_1, \dots, x_n]$ defines an algebraic variety in the affine space $\A^n = \A^n(\overline{\K})$ over an algebraic closure $\overline{\K}$:
\[V= V(\bff) = \{ \bfx \in \A^n \mid f_j(\bfx) = 0 \hbox{ for all } 1\le j\le m\},\]
which can be decomposed uniquely as a finite non-redundant union of irreducible varieties in $ \A^n $. For $k=0,\dots, n-1$, let $V_k\subset \A^n$ be the union of all the irreducible components of $V$ of dimension $k$. Then, we obtain the \emph{equidimensional decomposition} of $V$:
\[V= \bigcup_{0\le k \le n-1} V_k.\]
For every $k$, the set $V_k$ is an algebraic variety called the \emph{equidimensional component of dimension $k$} of $V$. Note that, for some values of $k$, this component may be empty.

The intersection of an equidimensional variety $W$ of dimension $k$ with a generic linear affine  variety $L$ of codimension $k$ is a finite set
$$\mathcal{P}= W \cap L,$$
called a \emph{witness set} of $W$. All witness sets of an equidimensional variety $W$ have the same cardinality, which is the \emph{degree} of $W$, denoted by $\deg(W)$. Moreover, $\deg(W)$ is the maximum number of points in the intersection of $W$ with a linear affine variety of codimension $k$, provided that the intersection is finite (see \cite{Heintz83}). The notion of witness set was introduced in the numerical algebraic geometry framework as a way to encode positive dimensional components of algebraic varieties. Indeed, if $W$ is an equidimensional component of the algebraic variety defined by a polynomial system $\bff$, the data $(\cP, L, \bff)$ characterizes $W$ (see \cite[Chapter 13]{SommeseWampler2005}).

The degree of an arbitrary variety $V\subset \A^n$ is defined as the sum of the degrees of its equidimensional components (see \cite{Heintz83}). If $V$ is defined by a sparse polynomial system $\bff= (f_1, \dots, f_n)$ supported on $\cA=(\cA_1, \dots, \cA_n)$, according to \cite[Theorem 16]{HJS2013}, we have
\[\deg(V) \le MV_n(\cA_1\cup \Delta_n, \dots, \cA_n \cup \Delta_n),\]
where $\Delta_n = \{ 0, e_1, \dots, e_n\}$ is the vertex set of the standard unitary simplex of $\R^n$ (here, $e_i$ is the $i$th vector of the canonical basis of $\R^n$).

\subsection{Complexity estimates}

Our algorithms work mainly with univariate and multivariate polynomials with rational coefficients. The notion of \emph{complexity} we consider is the number of arithmetic operations and comparisons in $\Q$ that the algorithm performs.

We encode a polynomial $f\in \Q[x_1,\dots, x_n]$ in one of the following ways:
\begin{itemize}
    \item \emph{dense representation}: given an upper bound $d\in \Z_{ \ge 0}$ for $\deg(f)$, by means of the vector of all the coefficients of $f$ in a pre-fixed order of the monomials of degree at most $d$;
    \item \emph{sparse representation}: given a finite set $\cA\subset (\Z_{ \ge 0})^n$ such that $f$ is supported on $\cA$, by means of the vector $((c_a, a); a\in \cA)$, where $c_a$ is the coefficient of the monomial $x^a$ in $f$;
    \item \emph{straight-line program representation} (slp for short):  by means of an algorithm without branchings that enables the evaluation of $f$ at any given point (for a precise definition, see \cite[Section 4.1]{BCS1997}).
\end{itemize}
Starting from the foundational work of Giusti and Heintz (see \cite{GiHe1993}), straight-line programs proved to be effective in the construction of algorithms to solve many algebraic and geometric problems. This is the reason why we use this way of encoding in our intermediate computations.

In our complexity estimates, we use the standard $O$-notation:
for $f, g : \Z_{\ge 0} \to \R$, $f (d) = O(g(d))$ if $|f (d)| \le c|g(d)|$ for a positive constant $c$.

We point out that any polynomial
$f \in \Q[x_1,\dots, x_n]$ of degree at most $d >0$ supported on a set $\cA\subset (\Z_{ \ge 0})^n$
can be evaluated by means of an slp of length $O(n|\cA| \log(d))$ that can be obtained straightforwardly from the sparse representation of $f$.
Here and in the sequel, we write $\log$ for the logarithm to the base $2$.

For the reader's convenience, we summarize the complexity estimates of some basic computations with univariate polynomials with coefficients in a field $k$ we will use. Following \cite{JMSW2009}, we use the notation
$M(d) := d \log^2(d) \log\log(d)$.

Multiplication and division with remainder of polynomials in $k[Y]$ of degrees bounded by $d$ can be computed with $O(M(d)/\log(d))$ arithmetic operations (see \cite[Corollary 7.19 and Theorem 9.6]{vzGG2013}).
The fast Extended Euclidean Algorithm (see \cite[Section 11]{vzGG2013})  computes the gcd of two polynomials $f, g \in k[Y]$ of degrees bounded by $d$ within complexity $O(M(d))$. We compute the gcd of polynomials $h_0, \dots, h_m\in k[Y]$ of degrees bounded by $d$ either in the standard recursive way, which takes $O(m M(d))$ operations, or in a probabilistic way, as $\gcd(h_0, \sum_{1\le j\le m} c_j h_j)$ for randomly chosen constants $c_1, \dots, c_m$ (see \cite[Algorithm 6.45]{vzGG2013}), within complexity $O(md+M(d))$ (see Section \ref{sec:algorithms} for the notion of probabilistic algorithm we use).

Multipoint evaluation and interpolation of a polynomial $f\in k[Y]$ of degree $d$ at $d+1$ distinct points  can be achieved within complexity $O(M(d))$ (see \cite[Corollaries 10.8 and 10.12]{vzGG2013}).
For a rational function $f= p/q$, where $p, q\in k[Y]$ have degrees bounded by $d$,  we compute the dense representation of $p$ and $q$ from its Taylor series expansion up to order $2d$ using Pad\'e approximation with $ O(M(m))$ arithmetic operations in $k$ (see \cite[Section 5.9]{vzGG2013}).

Any subresultant polynomial of two polynomials $f, g \in k[Y]$ of degrees bounded by $d$ (in particular, their resultant) can be computed with $O(M(d))$ operations in $k$ (see \cite[Corollary 11.19]{vzGG2013}).

In addition to manipulating polynomials, we will need to perform matrix computations. We denote $\Omega$ the exponent of the complexity of matrix multiplication, that is, a real number such that the product of two $n\times n$ matrices can be computed within complexity $O(n^\Omega)$ (see \cite[Section 12.1]{vzGG2013}). For practical computations, Strassen's multiplication algorithm with $\Omega = \log 7$ can be used. The complexity of the computation of the  determinant, the inverse and the characteristic polynomial of an $n\times n$ matrix can also  be computed within $O(n^{\Omega})$ arithmetic operations.

\section{Theoretical results} \label{sec:theoretical}

Let $\bff=(f_1, \dots, f_m)$ be a finite family of polynomials in $\Q[x_1, \dots, x_{n}]$ not all zero. Consider the variety $V = V(\bff) \subset \C^{n}$ and its equidimensional decomposition $$V = \bigcup_{k=0}^{n-1} V_k.$$

To characterize the equidimensional components $V_k$, we will first obtain witness supersets of each of them (that is, a finite set of points containing a witness set) and then remove all additional points from these witness supersets.
To this end, for a given a finite set $\cP\subset V$, we want to determine, for $k=0,\dots, n-1$, the set of all points in $\cP$ that lie in the equidimensional component of $V$ of dimension $k$ and in no component of $V$ of larger dimension. In this section, we will focus on this problem from a theoretical point of view.

\subsection{A single point}\label{sec:onepoint}

Given a point $\bfp \in V$, our aim is to determine the equidimensional component of the largest dimension that contains $\bfp$.

Fix $k$, with $1\le k \le n-1$. Take $\ell_1, \dots, \ell_k$ generic linear polynomials in $\Q[x_1,\dots, x_n]$ and set
\begin{align*}
L_0&= \{ \bfx \in \A^n \mid \ell_i(\bfx) -\ell_i(\bfp)=0 \ \hbox{for all } 1\le i \le k\} \\
L_1&=\{ \bfx\in \A^n \mid \ell_i(x) = 0\ \hbox{for all } 1\le i \le k\}
\end{align*}
Note that $L_0$ is a generic linear variety of codimension $k$ containing the given point $\bfp$.

The generic conditions we will assume on $\ell_1, \dots, \ell_k$ are the following:
\begin{description}
\item[{\small {\rm (H1)}}] $V_j \cap L_1$
is either empty or $(j-k)$-equidimensional for all $j \ge k$, and $V_j \cap L_1 = \emptyset$ for all $j < k$.
\item[{\small {\rm (H2)}}] $V_{k} \cap (\bigcup_{j>k} V_j) \cap L_1 = \emptyset$,
\item[{\small {\rm (H3)}}] $V_j \cap L_0$ is either empty or  $(j-k)$-equidimensional for all $j \ge k$,  and $V_j \cap L_0$ is either empty or equal to $\{\bfp\}$ for all $j < k$.
\end{description}

Consider a new parameter $s$ and the varieties
\begin{align*}
\widehat{V}&= V\times \A^1 = \{(\bfx,s) \in\A^{n+1} \mid \bfx \in V\}\\
    \cL &= \{ (\bfx,s)\in \A^{n+1} \mid \ell_i(\bfx) - (1-s) \ell_i(\bfp) = 0 \hbox{ for all } 1\le i \le k\}
\end{align*}

The variety $\cL$ can be interpreted as a family of linear varieties parameterized by $s$, with
$\cL \cap \{s=s_0\} = L_{s_0} \times \{s_0\}$ for every $s_0\in \A^1$, where
\[L_{s_0}= \{ \bfx \in \A^n \mid \ell_i(\bfx) -(1-s_0)\ell_i(\bfp)=0 \ \hbox{for all } 1\le i \le k\}. \]
In particular, $\cL \cap \{s=0\} = L_0\times \{0\}$ and $\cL \cap \{s=1\} = L_1 \times \{1\}$.

Note that, if the coefficients of $\ell_1, \dots, \ell_k$ are chosen in a Zariski open set ensuring that (H1) and (H2) hold for $L_1$, then $L_{s_0}$ satisfies the same conditions for generic $s_0\in \A^1$.

\smallskip
We have that $\widehat{V}= \bigcup_{j=0}^{n-1} \widehat{V}_j$, where $\widehat{V}_j = V_j \times \A^1$, that is, $\widehat{V}_j$ is the equidimensional component of dimension $j+1$ of $\widehat{V}$.

Let $\cZ\subset \A^{n+1}$ be the union of the irreducible components $C$ of $\widehat{V}\cap \cL$ such that $\overline{\pi_s(C)}=\A^1$, where $\pi_s\colon \A^{n+1} \to \A^1$, $\pi_s(\bfx, s) = s$. We write $\cZ= \bigcup_{h=0}^{n} \cZ_h$ for the equidimensional decomposition of $\cZ$.

\begin{lemma}\label{lem:equidecZ}
With the previous notation, we have that $\cZ_h \subset V_{h+k-1}\times \A^1$ for $h=1, \dots, n-k$, and $\cZ_h= \emptyset$ otherwise.
\end{lemma}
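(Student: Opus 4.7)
The plan is to analyze each irreducible component $C$ of $\widehat V\cap\cL$ whose projection under $\pi_s$ to $\A^1$ is dominant, tracking both its dimension and the equidimensional stratum $V_j$ of $V$ in which its image sits. I would first write $\widehat V\cap\cL = \bigcup_{j=0}^{n-1}(V_j\times\A^1)\cap\cL$ and note that any irreducible component $C$ of this intersection lies in a unique irreducible component $W\times\A^1$ of $\widehat V$, where $W$ is an irreducible component of some $V_j$ (hence $\dim W = j$). Since $\cL$ is cut out by $k$ equations in $\A^{n+1}$, the affine dimension theorem applied inside $W\times\A^1$ gives the lower bound $\dim C\ge j+1-k$.

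Next I would invoke the remark stated just before the lemma: by choosing the coefficients of $\ell_1,\dots,\ell_k$ in a suitable Zariski-open set, the analogues of (H1) and (H2) hold not only for $L_1$ but also for $L_{s_0}$ for every $s_0$ in a nonempty open subset $U\subset\A^1$. If $C$ lies in $\cZ$, then $\overline{\pi_s(C)}=\A^1$, so the generic fiber of $\pi_s|_C$ is nonempty of dimension $\dim C - 1$ and sits inside $V_j\cap L_{s_0}$ for $s_0\in U$. If $j<k$, the generic version of (H1) forces $V_j\cap L_{s_0}=\emptyset$, contradicting dominance, so no such component contributes to $\cZ$. If $j\ge k$, then $V_j\cap L_{s_0}$ is empty or $(j-k)$-equidimensional, whence $\dim C - 1\le j-k$. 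Combined with the lower bound, this forces the exact equality $\dim C = j+1-k$.

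Setting $h:=j+1-k$, the above shows that every irreducible component of $\cZ$ has dimension $h\in\{1,\dots,n-k\}$ (corresponding to $j\in\{k,\dots,n-1\}$) and is contained in $V_{h+k-1}\times\A^1$. This yields $\cZ_h\subset V_{h+k-1}\times\A^1$ for $1\le h\le n-k$, while $\cZ_h=\emptyset$ otherwise (the case $h=0$ being automatic since dominant components have positive dimension, and $h>n-k$ being ruled out by the equality just obtained). The main obstacle is really the genericity transfer: the conditions on $\ell_1,\dots,\ell_k$ were chosen to guarantee (H1)--(H3) at the two specific slices $L_0$ and $L_1$, and one must argue that the same Zariski-open condition propagates to give the analogous properties for $L_{s_0}$ on a dense open set of $s_0\in\A^1$. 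Once this is granted, the dimension count and the fiber-dimension characterization of dominance combine to finish the argument cleanly.
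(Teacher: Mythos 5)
Your argument is correct and follows essentially the same route as the paper: decompose $\widehat V\cap\cL$ over the irreducible components of $V$, get the lower bound $\dim C\ge \dim(W)+1-k$ from the $k$ defining equations of $\cL$, and get the matching upper bound from the generic fiber $C\cap\{s=s_0\}\subset W\cap L_{s_0}$ using that (H1)--(H2) propagate to $L_{s_0}$ for generic $s_0$, exactly as in the remark preceding the lemma. The only cosmetic slip is the claim that $C$ lies in a \emph{unique} component $W\times\A^1$ (uniqueness may fail and is not needed); what matters, and what holds, is that $C$ is an irreducible component of $\widehat W\cap\cL$ for some such $W$.
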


\begin{proof}
Let $V= \bigcup_W W$ be the irreducible decomposition of $V$. Then,
$\widehat{V} \cap \cL = \bigcup_{W}  (\widehat{W} \cap \cL)$, where $\widehat{W} = W \times \A^1$ for each $W$. In particular, each irreducible component $C$ of $\cZ$ is an irreducible component of $\widehat{W} \cap \cL$ for some irreducible component $W$ of $V$.

Let $W\subset \A^n$ be an irreducible component of $V$ and consider an irreducible component $C$ of $\widehat{W} \cap \cL$ such that $\overline{\pi_s(C)} = \A^1$. We claim that $\dim(C) = \dim(W)+1-k$.

First note that, since $\cL$ is defined by $k$ equations, all the irreducible components of $\widehat{W} \cap \mathcal{L}$ have dimension at least $\dim(\widehat{W}) -k = \dim(W)+1-k$.
If $\overline{\pi_s(C)} = \A^1$, for generic $s_0$, we have that $C\cap \{s=s_0\}$  is a non-empty equidimensional variety of dimension $\dim(C)-1$. Now,
$C\cap \{s =s_0\}\subset \widehat{W}\cap \cL \cap \{ s = s_0\} = (W\cap L_{s_0}) \times \{s_0\}$.
By the genericity of $L_{s_0}$ it follows that $W\cap L_{s_0}$ is either empty, if $\dim(W) <k$, or an equidimensional variety of dimension $\dim(W) - k$, if $\dim(W) \ge k$. We conclude that $\dim(W) \ge k$ and $\dim(C) -1= \dim(C\cap \{s=s_0\})  \le \dim(W) - k$. This finishes the proof of the claim.

We conclude that every component $C$ in $\cZ_h$ is an irreducible component of $\widehat{W} \cap \cL$, for an irreducible component $W$ of $V$ such that $\dim(W) = h+k-1$ and so, $\cZ_h \subset \widehat{V}_{h+k-1}\cap \cL \subset {V}_{h+k-1}\times \A^1$. In particular, if $\cZ_h\ne \emptyset$, then $k \le h+k-1 \le \dim(V) = n-1$, that is, $1\le h\le n-k$.
\end{proof}

\begin{lemma}\label{lem:incurves}
Let $\bfq\in V_k \cap L_0$ be a point such that $\bfq \notin \bigcup_{j>k} V_j$. Then $(\bfq, 0) \in \cZ_1$.
\end{lemma}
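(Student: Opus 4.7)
The plan is to exhibit one irreducible component $C$ of $\widehat V \cap \cL$ containing $(\bfq,0)$ of dimension exactly $1$ that projects dominantly onto $\A^1$; by Lemma \ref{lem:equidecZ} such a $C$ lies in $\cZ_1$, giving the result. The central observation I will exploit is that $(\bfq,0)$ is an \emph{isolated point} of the fiber $\widehat V \cap \cL \cap \{s=0\} = (V \cap L_0)\times\{0\}$. (Membership of $(\bfq,0)$ in $\widehat V \cap \cL$ is automatic, since $\bfq\in V$ and $\bfq\in L_0$ forces the defining equations of $\cL$ at $s=0$ to reduce to $\ell_i(\bfx)-\ell_i(\bfp)=0$, which hold at $\bfq$.)

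To establish the isolation, I would decompose $V \cap L_0 = \bigcup_{j} (V_j \cap L_0)$ and apply (H3) case by case. For $j > k$, the set $V_j \cap L_0$ is $(j-k)$-equidimensional, hence of positive dimension when non-empty, but it is contained in $V_j$, so by the hypothesis $\bfq \notin \bigcup_{j>k} V_j$ none of its irreducible components contains $\bfq$. For $j=k$, $V_k \cap L_0$ is already $0$-dimensional. For $j<k$, (H3) gives $V_j \cap L_0 \subseteq \{\bfp\}$, which is $0$-dimensional as well. Combining these, the only irreducible component of $V\cap L_0$ through $\bfq$ is the point $\{\bfq\}$ itself.

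Next I would choose an irreducible component $W$ of $V$ of dimension $k$ passing through $\bfq$, which exists because $\bfq \in V_k$. Since $\cL$ is cut out by $k$ equations in $\A^{n+1}$, every irreducible component of $\widehat W \cap \cL$ has dimension at least $\dim(\widehat W)-k = 1$; pick one such component $C_0$ passing through $(\bfq,0)$, and let $C$ be an irreducible component of the larger variety $\widehat V \cap \cL$ containing $C_0$, so $\dim(C)\geq 1$ and $(\bfq,0)\in C$.

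To finish, I would bound $\dim(C)$ and establish dominance using the isolation above. Any irreducible component of $C\cap\{s=0\}$ has dimension at least $\dim(C)-1$; the one through $(\bfq,0)$ sits inside a component of $(V\cap L_0)\times\{0\}$ through $(\bfq,0)$, which is $\{(\bfq,0)\}$ by the isolation just established, forcing $\dim(C)\leq 1$ and hence $\dim(C)=1$ (in particular $C=C_0$). For dominance, $\pi_s(C)$ is either a point or dense in $\A^1$; if it were the single point $\{0\}$, then $C \subseteq (V\cap L_0)\times\{0\}$ would be a one-dimensional irreducible subvariety through the isolated point $(\bfq,0)$, a contradiction. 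The main subtle point is carefully separating the cases $j>k$, $j=k$, $j<k$ in (H3) to rule out any positive-dimensional component of $V\cap L_0$ passing through $\bfq$; once that isolation is in hand, everything else is a clean dimension count against the fiber $\{s=0\}$.
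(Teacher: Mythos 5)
Your proof is correct and follows essentially the same route as the paper's: both first produce a component $C$ of $\widehat V\cap\cL$ through $(\bfq,0)$ of dimension at least $1$ (you via a component $W$ of $V_k$ through $\bfq$, the paper via the observation that $\widehat V_k\cap\cL$ has no isolated points), then use (H3) together with $\bfq\notin\bigcup_{j>k}V_j$ to show $(\bfq,0)$ is isolated in $\widehat V\cap\cL\cap\{s=0\}$, and from that conclude $C\not\subset\{s=0\}$, $\dim C=1$, hence $C\subset\cZ_1$. The only cosmetic differences are that you fix a single $W$ rather than arguing with all of $\widehat V_k$, and you spell out the dimension bound $\dim C\le 1$ via fibers over $s=0$ where the paper states it slightly more tersely.
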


\begin{proof}
As $(\bfq, 0) \in (V_k \cap L_0) \times \{ 0\} \subset \widehat{V}_k \cap \cL \subset \widehat{V} \cap \cL$ and $\widehat{V}_k \cap \cL$ does not have isolated points, there is an irreducible component $C$ of $\widehat{V} \cap \cL$ with $(\bfq, 0) \in C$ and $\dim(C)\ge 1 $.

Now,  $(\bfq,0) $ is an isolated point of
\[\widehat{V} \cap \cL \cap \{s=0\} = \Big(\bigcup_{j \le k} (V_j \cap L_0) \cup \bigcup_{j>k} (V_j\cap L_0)\Big) \times \{0\},\] because of the assumption $\bfq \notin \bigcup_{j>k} V_j $ and the genericity of $L_0$, which implies that $\bigcup_{j \le k} (V_j \cap L_0) $ is a finite set.
Then, $C \not\subset \{ s=0\}$ and $\dim(C) =1$. Therefore, $C\subset \cZ_1$.
\end{proof}

\bigskip
The previous lemmas enable us to detect the equidimensional component of $V$ of the largest dimension containing a point $\bfp\in V$. Consider $\ell_1,\dots, \ell_{n-1}$ generic linear polynomials in $\Q[x_1,\dots, x_n]$ so that assumptions (H1), (H2) and (H3) are met for every $1\le k \le n-1$.

For each $k$, with $1\le k \le n-1$, consider the variety $\cZ^{(k)}\subset \A^{n+1}$ defined from $V$, $\ell_1, \dots, \ell_k$ and $\bfp$ as above, and its equidimensional decomposition  $\cZ^{(k)} = \bigcup_{h=1}^{n-k} \cZ^{(k)}_h$.

\begin{proposition}\label{prop:dimpunto}
With the previous assumptions and notation, for a point $\bfp \in V$, we have:
\begin{itemize}
    \item $\bfp\in V_{n-1}$ if and only if $(\bfp,0) \in \cZ^{(n-1)}$.
    \item If $\bfp\notin \bigcup_{j>k} V_j$, the following are equivalent:
    \begin{itemize}
        \item $\bfp\in V_k$,
        \item $(\bfp,0)\in \cZ^{(k)}_1$,
        \item $(\bfp,0)\in \cZ^{(k)}$.
    \end{itemize}
\end{itemize}
\end{proposition}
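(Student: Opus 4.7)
The plan is to deduce the proposition directly from Lemmas \ref{lem:equidecZ} and \ref{lem:incurves}, treating the proposition essentially as a bookkeeping exercise that packages the two lemmas into a usable criterion. The key observation that drives everything is that, by the first lemma, every irreducible component of $\cZ^{(k)}$ of "height" $h$ projects into $V_{h+k-1}\times\A^1$, so membership of $(\bfp,0)$ in $\cZ^{(k)}$ immediately constrains the dimension of the component of $V$ containing $\bfp$, while the second lemma provides the converse by producing a genuine curve through $(\bfp,0)$ whenever $\bfp$ lies in $V_k$ but in no higher-dimensional component.

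For the first bullet, I would note that when $k=n-1$ Lemma \ref{lem:equidecZ} forces $\cZ^{(n-1)}=\cZ^{(n-1)}_1\subset V_{n-1}\times\A^1$, because the range $1\le h\le n-k$ collapses to $h=1$. Hence $(\bfp,0)\in\cZ^{(n-1)}$ implies $\bfp\in V_{n-1}$ at once. For the reverse implication, apply Lemma \ref{lem:incurves} with $k=n-1$ and $\bfq=\bfp$: the hypothesis $\bfq\in V_k\cap L_0$ is met because $L_0$ was constructed to pass through $\bfp$, and the hypothesis $\bfq\notin\bigcup_{j>k}V_j$ holds vacuously. The lemma then yields $(\bfp,0)\in\cZ^{(n-1)}_1=\cZ^{(n-1)}$.

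For the second bullet, I would prove a cycle of implications. The step $(\bfp,0)\in\cZ^{(k)}_1\Longrightarrow(\bfp,0)\in\cZ^{(k)}$ is trivial. The step $(\bfp,0)\in\cZ^{(k)}\Longrightarrow\bfp\in V_k$ follows from Lemma \ref{lem:equidecZ}: if $(\bfp,0)$ belongs to $\cZ^{(k)}_h$ for some $h\in\{1,\dots,n-k\}$, then $\bfp\in V_{h+k-1}$; the standing assumption $\bfp\notin\bigcup_{j>k}V_j$ forces $h+k-1\le k$, hence $h=1$ and $\bfp\in V_k$ (and, as a by-product, $(\bfp,0)\in\cZ^{(k)}_1$). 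Finally, $\bfp\in V_k\Longrightarrow(\bfp,0)\in\cZ^{(k)}_1$ is again a direct application of Lemma \ref{lem:incurves} with $\bfq=\bfp$, valid because $\bfp\in L_0$ by construction and $\bfp\notin\bigcup_{j>k}V_j$ by hypothesis.

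There is no real obstacle: the proof is purely a matter of verifying that the hypotheses of Lemma \ref{lem:incurves} are satisfied in each case and of reading off the dimension information from Lemma \ref{lem:equidecZ}. The only subtle point worth emphasising in the write-up is why, in the second bullet, we may conclude $h=1$ rather than merely $h+k-1\le k$ — this uses $h\ge 1$ from the equidimensional decomposition range given by Lemma \ref{lem:equidecZ}, which in turn reflects the fact that components of $\cZ^{(k)}$ dominate the $s$-line and are therefore at least one-dimensional.
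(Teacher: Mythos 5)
Your proposal is correct and follows essentially the same route as the paper: both directions are obtained by combining Lemma \ref{lem:equidecZ} (which gives $\cZ^{(k)}\subset\bigcup_{j\ge k}V_j\times\A^1$ and collapses $\cZ^{(n-1)}$ to $\cZ^{(n-1)}_1$) with Lemma \ref{lem:incurves} (applied to $\bfq=\bfp$, which lies in $L_0$ by construction). Your write-up is slightly more explicit about the cycle of implications and about why $h=1$, but the content is identical to the paper's proof.
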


\begin{proof}
By Lemma \ref{lem:equidecZ} for $k=n-1$, we have that $\cZ^{(n-1)}=\cZ_1^{(n-1)} \subset V_{n-1} \times \A^1$.

Assume now that $\bfp\notin \bigcup_{j>k} V_j$ for a fixed $k$ (this holds, in particular, for $k=n-1$). Since $\bfp\in L_0$, Lemma \ref{lem:incurves} ensures that if $p\in V_{k}$,  then $(\bfp,0)\in \cZ_1^{(k)} \subset \cZ^{(k)}$. On the other hand, due to Lemma \ref{lem:equidecZ}, $\cZ^{(k)} \subset \bigcup_{j\ge k} V_j \times \A^1$. Then, if $(\bfp, 0) \in \cZ^{(k)}$, it follows that $\bfp \in V_k$.
\end{proof}

\subsection{A finite set of points}\label{subsec:finiteset}

In our algorithms we will not be able to work with individual points, but we will have to deal with finite sets of points represented by geometric resolutions. To do this, we will generalize our previous results to this more general setting.

Assume we have a finite set of points $\cP \subset V$. Our aim is, as before, to introduce a family of algebraic varieties that enables us to determine, for every $\bfp\in \cP$, which is the largest equidimensional component of $V$ that contains $\bfp$.

For every $\bfp \in \cP$, let $\theta_{\bfp}\in \C$ such that $\theta_{\bfp}\ne \theta_{\bfp'}$ for $\bfp \ne \bfp'$.
As in Section \ref{sec:onepoint}, fix $k$ with $1\le k \le n-1$,  take $\ell_1, \dots, \ell_{k}$ (generic) linear affine functions satisfying (H1), (H2) and (H3),
and consider
\begin{itemize}
\item $\widehat V_{\bfp} = \{ \theta_{\bfp}\} \times V \times \A^1 \subset \A^{n+2}$
\item $\cL_\bfp = \{\theta_\bfp\} \times \{ (\bfx,s)\in \A^{n+1} \mid \ell_i(\bfx) - (1-s) \ell_i(\bfp) = 0 \hbox{ for all } 1\le i \le k\} \subset \A^{n+2}$
\item $\cZ_\bfp\subset \A^{n+2}$ the union of the irreducible components $C$ of $\widehat V_{\bfp} \cap \cL_\bfp$ such that $\overline{\pi_s(C)} = \A^{1}$.
\end{itemize}

We may also assume that
\begin{description}
\item[{\small {\rm (H4)}}] $\bfp' \notin  \{ \bfx\in \A^{n} \mid \ell_i(\bfx) -\ell_i(\bfp) = 0 \hbox{ for all } 1\le i \le k\}$ for $\bfp' \in \cP$, $\bfp' \ne \bfp$.
\end{description}

Let $\Theta= \{ \theta_{\bfp}\mid \bfp \in \cP\}$ and $V_\cP = \Theta \times V \subset \A^{n+1}$.
Consider the following algebraic varieties in $\A^{n+2}$:
\[
\widehat{V}_\cP= V_\cP \times \A^1 = \bigcup_{\bfp \in \cP} \widehat V_{\bfp}, \qquad
    \cL_\cP = \bigcup_{\bfp \in \cP} \cL_{\bfp}, \qquad \cZ_\cP = \bigcup_{\bfp \in \cP} \cZ_{\bfp}.\]
Since $\widehat{V}_\cP \cap \cL_\cP = \bigcup_{\bfp\in \cP} V_{\bfp} \cap \cL_{\bfp}$, we have that $\cZ_\cP$ is the union of the irreducible components $\mathcal{C}$ of $\widehat{V}_\cP \cap \cL_\cP$ such that $\overline{\pi_s(\mathcal{C})} = \A^1$.

Note that the equidimensional decomposition of $V_\cP$ is $V_\cP = \bigcup_{j=0}^{n-1} (\Theta \times V_j)$, that is, the equidimensional component of $V_{\cP}$ of dimension $j$ is $V_{\cP, j} =\Theta \times V_j$, for $j=0,\dots, n-1$.
Let $\cZ_\cP = \bigcup_{0\le h \le n+1} \cZ_{\cP, h}$ be the equidimensional decomposition of $\cZ_\cP$. From Lemmas \ref{lem:equidecZ} and \ref{lem:incurves}, we deduce:

\begin{lemma}\label{lem:severalpoints}
With the previous notation, we have:
\begin{enumerate}
    \item[(a)] $\cZ_{\cP,h} \subset V_{\cP, h+k-1} \times \A^1$ for $h=1,\dots, n-k$ and $\cZ_{\cP,h} = \emptyset$ otherwise.
    \item[(b)] For every $\bfp\in \cP$, if $\bfp\in V_k$ and $\bfp \notin \bigcup_{j>k}V_j$, then $(\theta_\bfp, \bfp, 0)\in \cZ_{\cP,1}$. \hfill $\square$
\end{enumerate}
\end{lemma}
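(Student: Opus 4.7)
The plan is to reduce the statement to the single-point case already analyzed in Section \ref{sec:onepoint}, by exploiting the disjoint-slice structure coming from the distinctness of the scalars $\theta_\bfp$.

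First, I would observe that $\cZ_\cP = \bigcup_{\bfp\in\cP} \cZ_\bfp$ by definition and that for $\bfp\ne\bfp'$ the varieties $\cZ_\bfp$ and $\cZ_{\bfp'}$ are disjoint: both $\widehat V_\bfp\cap\cL_\bfp$ and $\widehat V_{\bfp'}\cap\cL_{\bfp'}$ are contained in the hyperplanes $\{y=\theta_\bfp\}$ and $\{y=\theta_{\bfp'}\}$ respectively (where $y$ denotes the first coordinate of $\A^{n+2}$), so their intersection is empty. Consequently, the irreducible decomposition of $\cZ_\cP$ is the disjoint union of the irreducible decompositions of the $\cZ_\bfp$, and the equidimensional component of dimension $h$ of $\cZ_\cP$ equals the union over $\bfp\in\cP$ of the equidimensional components of dimension $h$ of the individual $\cZ_\bfp$.

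Second, I would identify each $\cZ_\bfp$ with the variety $\cZ$ of Section \ref{sec:onepoint} attached to the single point $\bfp$. The isomorphism $\iota_\bfp\colon \A^{n+1}\to \{\theta_\bfp\}\times\A^{n+1}$, $(\bfx,s)\mapsto(\theta_\bfp,\bfx,s)$, identifies $\widehat V\cap\cL$ (built from $\bfp$) with $\widehat V_\bfp\cap\cL_\bfp$, and commutes with the projection onto the $s$-coordinate. Hence, via $\iota_\bfp$, the irreducible components of $\cZ_\bfp$ whose image dominates $\A^1$ correspond exactly to those of the single-point $\cZ$, with dimensions preserved.

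Given these two observations, part (a) follows by applying Lemma \ref{lem:equidecZ} to each $\bfp\in\cP$: the component of dimension $h$ of $\cZ_\bfp$ is contained in $\{\theta_\bfp\}\times V_{h+k-1}\times\A^1$ if $1\le h\le n-k$ and is empty otherwise; taking the union over $\bfp\in\cP$ yields
\[\cZ_{\cP,h}\subset \Theta\times V_{h+k-1}\times\A^1 = V_{\cP,h+k-1}\times\A^1\]
in the stated range and $\cZ_{\cP,h}=\emptyset$ outside. For part (b), fixing $\bfp\in\cP$ with $\bfp\in V_k\setminus\bigcup_{j>k}V_j$, note that $\bfp$ lies in the linear variety $L_0$ associated to $\bfp$ by construction; applying Lemma \ref{lem:incurves} gives $(\bfp,0)\in\cZ_1$, and transferring through $\iota_\bfp$ yields $(\theta_\bfp,\bfp,0)\in\cZ_{\cP,1}$.

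The only delicate point I anticipate is the clean justification that the equidimensional decomposition of a disjoint union of varieties is obtained componentwise; this is where the distinctness of the $\theta_\bfp$ is essential, since it forces the decomposition to split along the $y$-coordinate. Assumption (H4) does not seem to intervene directly in this lemma (the slices $\{y=\theta_\bfp\}$ already separate everything), but it will presumably be used later to keep the different $\cL_\bfp$ in general position when the construction is processed algorithmically.
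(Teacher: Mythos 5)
Your argument is correct and is precisely the deduction the paper leaves implicit: the paper states the lemma as an immediate consequence of Lemmas \ref{lem:equidecZ} and \ref{lem:incurves}, and your reduction via the disjoint slices $\{\theta_\bfp\}\times\A^{n+1}$ (so that the equidimensional decomposition of $\cZ_\cP$ splits as the union of those of the $\cZ_\bfp$, each identified with the single-point $\cZ$) is exactly the intended justification. Your remark that (H4) plays no role here but is needed later (in Proposition \ref{prop:defseveralpoints}) is also accurate.
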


We will now consider the previous construction for every $1\le k \le n-1$. Let $\ell_1, \dots, \ell_{n-1}$ be generic linear polynomials in $\Q[x_1, \dots, x_n]$ so that assumptions (H1), (H2), (H3) and (H4) are met for every $1\le k\le n-1$. For each $k$, let $\cZ_\cP^{(k)}\subset \A^{n+2}$ be a variety defined from the variety $V$, the finite set $\cP$, and $\ell_1, \dots, \ell_k$ as before.

From the previous lemma we deduce the following result that extends Proposition \ref{prop:dimpunto}:
\begin{proposition}\label{prop:defseveralpoints}
Let $\Pi: \A^{n+2} \to \A^{n+1}$ be the projection $(\theta, x, s) \mapsto (x,s)$.
With the previous assumptions and notation, for every $p\in \cP$ we have:
\begin{itemize}
    \item $p\in V_{n-1}$ if and only if $(p,0) \in \Pi(\cZ_\cP^{(n-1)})$.
    \item If $p\notin \bigcup_{j>k}V_j$, then $p\in V_k$ if and only if $(p,0) \in \Pi(\cZ_\cP^{(k)})$.
\end{itemize}
\end{proposition}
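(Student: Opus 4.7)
The plan is to reduce the multi-point statement to the single-point Proposition \ref{prop:dimpunto} by exploiting the product structure and the separation condition (H4). The key observation is that $\cZ_\cP^{(k)} = \bigcup_{\bfp \in \cP} \cZ_\bfp^{(k)}$, so $\Pi(\cZ_\cP^{(k)}) = \bigcup_{\bfp \in \cP} \Pi(\cZ_\bfp^{(k)})$. Moreover, for each fixed $\bfp \in \cP$, the variety $\cZ_\bfp^{(k)}$ lies in $\{\theta_\bfp\} \times \A^{n+1}$, and its projection $\Pi(\cZ_\bfp^{(k)})$ coincides with the single-point variety $\cZ^{(k)}$ constructed in Section \ref{sec:onepoint} from $V$, $\bfp$ and $\ell_1, \dots, \ell_k$, since removing the constant first coordinate $\theta_\bfp$ turns the construction defining $\cZ_\bfp^{(k)}$ into the construction defining $\cZ^{(k)}$.

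Next I would split the membership $(\bfp, 0) \in \Pi(\cZ_\cP^{(k)})$ into contributions from $\Pi(\cZ_\bfp^{(k)})$ and from $\Pi(\cZ_{\bfp'}^{(k)})$ with $\bfp' \in \cP$, $\bfp' \ne \bfp$. The crucial step is to rule out the latter using (H4): for $\bfp' \ne \bfp$, the slice $\Pi(\cZ_{\bfp'}^{(k)}) \cap \{s = 0\}$ is contained in $L_0^{(\bfp')} \times \{0\}$, where $L_0^{(\bfp')}$ is the linear variety $\{\bfx : \ell_i(\bfx) = \ell_i(\bfp') \text{ for all } i\}$, and (H4) says precisely that $\bfp \notin L_0^{(\bfp')}$. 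Hence $(\bfp, 0) \notin \Pi(\cZ_{\bfp'}^{(k)})$ whenever $\bfp' \ne \bfp$, so $(\bfp, 0) \in \Pi(\cZ_\cP^{(k)})$ if and only if $(\bfp, 0) \in \Pi(\cZ_\bfp^{(k)})$.

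With this equivalence in hand, both statements of the proposition follow directly from Proposition \ref{prop:dimpunto} applied to the point $\bfp$ and the corresponding $\cZ^{(k)}$: for $k = n-1$, the identification of $\Pi(\cZ_\bfp^{(n-1)})$ with the single-point $\cZ^{(n-1)}$ gives the first bullet; for $k < n-1$ under the hypothesis $\bfp \notin \bigcup_{j>k} V_j$, the same identification together with the equivalence $\bfp \in V_k \Leftrightarrow (\bfp, 0) \in \cZ^{(k)}$ yields the second bullet. The ``only if'' direction (existence of a curve through $(\bfp, 0)$) can alternatively be read off Lemma \ref{lem:severalpoints}(b).

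The only nontrivial point is the cross-term cancellation: making sure that one point $\bfp$ cannot be placed in $\Pi(\cZ_\cP^{(k)})$ by a curve coming from another point $\bfp'$. This is exactly why assumption (H4) was built into the construction, and it is what prevents the multi-point setting from being strictly harder than the single-point one. Everything else is bookkeeping about how the $\theta$-coordinate and the projection $\Pi$ interact with the product decompositions $\widehat V_\cP = \bigsqcup_\bfp \widehat V_\bfp$ and $\cL_\cP = \bigcup_\bfp \cL_\bfp$.
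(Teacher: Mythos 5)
Your proof is correct and follows essentially the same route as the paper: both arguments decompose $\widehat V_\cP\cap\cL_\cP$ over the points of $\cP$ and use (H4) to show that $(\bfp,0)$ cannot arise from the component $\cZ_{\bfp'}$ of another point $\bfp'$, reducing everything to the single-point results. The only cosmetic difference is that you pass through Proposition \ref{prop:dimpunto} after identifying $\Pi(\cZ_\bfp^{(k)})$ with the single-point variety $\cZ^{(k)}$, whereas the paper cites the multi-point Lemma \ref{lem:severalpoints} directly; these are equivalent, since that lemma is itself deduced from the single-point Lemmas \ref{lem:equidecZ} and \ref{lem:incurves}.
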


\begin{proof}
The result follows from Lemma \ref{lem:severalpoints} taking into account that our genericity assumption (H4) implies that, for $\bfp \in \cP$ and $\theta \in \Theta$,
$(\theta, \bfp, 0) \in V_\cP \cap \cL_\cP$ if and only if $\theta = \theta_p$: indeed, if $(\theta, \bfp, 0)\in V_\cP \cap \cL_\cP$, there exists $\bfp' \in \cP$ with $\theta = \theta_{\bfp'}$ and $(\theta, \bfp, 0) \in V_{\bfp'} \cap \cL_{\bfp'}$; in particular, $\bfp \in \cL_{\bfp'} \cap \{ s=0\} = \{ \bfx \in \A^n \mid\ell_i(\bfx) - \ell_i(\bfp') =0 \hbox{ for all } 1\le i \le k\}$ and so, $\bfp' =\bfp$.
\end{proof}

\section{Algorithms}\label{sec:algorithms}

The algorithms we introduce in this paper are probabilistic, in the sense that they make random choices of points which lead
to a correct computation provided the points lie outside certain proper Zariski closed sets of suitable
affine spaces. Their error probability could be controlled by making these random choices within sufficiently large
sets of integer numbers whose size depend on the degrees of the polynomials defining the previously
mentioned Zariski closed sets, by means of the Schwartz–Zippel lemma (\cite{Schwartz1980}, \cite{Zippel1993}).

This algorithmic model has been widely used for solving elimination problems dealing with polynomial systems and, in particular, to obtain the
first complexity bounds polynomial in the output size for equidimensional decomposition (see \cite{JeronimoSabia2002}, \cite{Lecerf2003}).

\subsection{Geometric resolutions} \label{subsec:geomres}

Our algorithms deal with finite sets of points which are the zero sets of systems of multivariate polynomials. Thus, we need to represent them in a way suitable for algorithmic purposes. A representation that goes back to Kronecker (\cite{Kronecker1882}) and is widely used in computer algebra is a \emph{geometric resolution} (see, for instance, \cite{GiHe1993}, \cite{GLS01} and the references therein).

Let $\cP\subset \A^n$ be a zero-dimensional algebraic variety definable over $\K$. A \emph{geometric resolution of $\cP$} consists in a polynomial parametrization of the points of $\cP$ by the roots of a univariate polynomial $Q\in \K[Y]$.

Given a linear form $\ell\in \K[x_1, \dots, x_n]$ that separates the points of $\cP$, that is, such that $\ell(\bfp) \ne \ell(\bfp')$ for $\bfp\ne \bfp' $ in $\cP$ we consider its \emph{minimal polynomial} $Q(Y) = \prod_{\bfp \in \cP} (Y-\ell(\bfp))$.
We will consider two different kinds of geometric resolution, differing in the way the points of $\cP$ are parametrized from the zeros of $Q$:
\begin{itemize}
    \item \emph{Shape-lemma representation:} $(Q(Y), v_1(Y), \dots, v_n(Y))$ with $\deg(v_i) < |\cP|$ for $i=1,\dots, n$, such that
    \[\cP = \{ (v_1(y),\dots, v_n(y))\in \A^n \mid y \in \overline{\K}: Q(y) = 0 \}\]
    \item \emph{Kronecker representation:} $(Q(Y), w_1(Y), \dots, w_n(Y))$ with $\deg(w_i) < |\cP|$ for $i=1,\dots, n$, such that
    \[\cP = \Big\{ (\frac{\partial Q}{\partial Y} (y))^{-1} (w_1(y),\dots, w_n(y))\in \A^n \mid y \in \overline{\K}: Q(y) = 0 \Big\}\]
\end{itemize}

We can turn a Kronecker representation into a shape-lemma representation by computations with univariate polynomials within complexity $O(n M(|\cP|))$: given a Kronecker representation $(Q(y), w_1(Y), \dots, w_n(Y))$ we can obtain polynomials $v_1(Y), \dots, v_n(Y)$ providing a shape-lemma representation by inverting $\frac{\partial Q}{\partial Y}(Y)$ modulo $Q(Y)$ (note that these polynomials are relatively prime) and multiplying this inverse with $w_i(Y)$ modulo $Q(Y)$ for $i=1,\dots, n$. Similarly, we may obtain a Kronecker representation from a shape-lemma representation within the same complexity bounds.

When considering sparse systems, mixed volumes appear naturally as upper bounds for the degrees of polynomials in geometric resolutions of deformation varieties.
One of the results that we will use in the complexity bounds of our algorithms is the following (see \cite[Lemma 2.3]{JMSW2009}):

\begin{lemma}\label{lem:deg_minimal}
Let $G_1,\dots, G_n\in \K[s][x_1, \dots, x_n]$ be polynomials with supports $C_1, \dots, C_n \subset (\Z_{\ge 0})^n$ containing the set  of vertices of the standard unitary simplex $\Delta_n$ of $\R^n$, and $\deg_s(G_i)\le d$ for $i=1,\dots, n$.
If $Q\in \K[s][Y]$ is a primitive minimal polynomial of a generic linear form over the set of isolated common zeros of $G_1, \dots, G_n$ over $\overline{\K(s)}$, then
\begin{align*}
\deg_Y(Q) &\le MV_n(C_1, \dots, C_n) \quad \hbox{and} \\
\deg_s(Q) &\le d \sum_{1\le i \le n} MV_n(\Delta_n, C_1, \dots, C_{i-1}, C_{i+1} , \dots, C_n).
\end{align*}
\end{lemma}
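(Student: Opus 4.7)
The plan is to derive both bounds simultaneously by realising $Q(s,Y)$ as a factor of a specialisation of the sparse mixed resultant of the augmented system $(G_1,\dots,G_n,\ell(x)-Y)$, viewed as $n+1$ polynomials in the $n$ variables $x$, and then reading off the degrees from the Pedersen--Sturmfels formula, which expresses the degree of a sparse resultant in each coefficient group as a mixed volume of the remaining support polytopes.

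For the $Y$-degree bound, I would argue directly: since $\ell$ is a generic linear form, the number of distinct values it attains on the isolated common zeros of $G_1,\dots,G_n$ in $\overline{\K(s)}^n$ equals the number of such zeros, and this is precisely $\deg_Y(Q)$. The assumption $\Delta_n\subset C_i$ implies $0\in C_i$ and $e_1,\dots,e_n\in C_i$ for every $i$, so the affine version of Bernstein's theorem (valid when all supports contain the standard simplex) bounds the number of isolated affine solutions by $MV_n(C_1,\dots,C_n)$, yielding the first inequality.

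For the $s$-degree bound, I would introduce the new indeterminate $Y$, set $G_{n+1}(x,Y):=\ell(x)-Y$ (whose support in $x$ is contained in $\Delta_n$, with the only $Y$-dependent coefficient being the constant term $-Y$), and form the sparse mixed resultant $R$ of $(G_{n+1},G_1,\dots,G_n)$ regarded as a polynomial in the generic coefficients of the $G_j$'s and in $Y$. By the Pedersen--Sturmfels degree formula, the degree of $R$ in the coefficient group of $G_i$ ($1\le i\le n$) equals $MV_n(\Delta_n,C_1,\dots,\widehat{C_i},\dots,C_n)$, while its degree in the coefficients of $G_{n+1}$ equals $MV_n(C_1,\dots,C_n)$. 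Specialising the coefficients of $G_i$ to their actual values in $\K[s]$ of $s$-degree at most $d$ then produces a polynomial in $\K[s,Y]$ whose $s$-degree is at most $d\sum_{i=1}^{n}MV_n(\Delta_n,C_1,\dots,\widehat{C_i},\dots,C_n)$ and whose $Y$-degree is at most $MV_n(C_1,\dots,C_n)$.

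It remains to show that $Q(s,Y)$ divides this specialised resultant in $\K[s,Y]$: for each isolated zero $\zeta(s)\in\overline{\K(s)}^n$ of $G_1,\dots,G_n$, substituting $Y=\ell(\zeta(s))$ yields a common solution of the full augmented system, so the specialised $R$ vanishes on the curve $\{Y=\ell(\zeta(s))\}$ in the $(s,Y)$-plane; the union of these curves over all isolated-zero branches is cut out precisely by a power of $Q$, giving the divisibility and hence transferring both degree bounds from $R$ to $Q$. The main technical obstacle I anticipate is ensuring non-degeneracy of the construction: that the sparse mixed resultant is not identically zero and that the support collection $(C_1,\dots,C_n,\Delta_n)$ is essential so that Pedersen--Sturmfels applies cleanly. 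Both are secured by the hypothesis $\Delta_n\subset C_i$, which forces every support to be full-dimensional and to contain the origin.
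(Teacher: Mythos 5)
First, a remark on the comparison: the paper does not prove this lemma at all — it is quoted from \cite[Lemma 2.3]{JMSW2009} — so there is no in-paper argument to measure yours against; your proposal has to stand on its own. Your treatment of $\deg_Y(Q)$ does: since $\Delta_n\subseteq C_i$ for all $i$, the affine BKK bound (Li--Wang, Huber--Sturmfels) applies over the algebraically closed field $\overline{\K(s)}$, and a generic $\ell$ separates the isolated zeros, so $\deg_Y(Q)$ equals their number and is bounded by $MV_n(C_1,\dots,C_n)$.

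The $s$-degree argument, however, has a genuine gap at its pivotal step, namely the claim that $Q$ divides the specialised resultant $R\in\K[s,Y]$. That divisibility is only informative if $R\not\equiv 0$, and $R$ \emph{is} identically zero in exactly the situation the lemma is meant to cover: if $V(G_1,\dots,G_n)\subset\overline{\K(s)}^{\,n}$ has a positive-dimensional component $W$ meeting the torus, then $\ell(W)$ is infinite, so $R(s,y)=0$ for infinitely many $y\in\overline{\K(s)}$ and hence $R\equiv 0$, making "$Q$ divides $R$" vacuous. The lemma singles out the \emph{isolated} zeros precisely because non-isolated ones are expected (and they do occur in the paper's applications, e.g.\ for the system $\bfF$ in Step 4 of \texttt{DiscardLowerDim}). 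Repairing this requires a genuinely new ingredient: either a Canny-type perturbation (replace $G_i$ by $G_i+\epsilon H_i$ with $H_i$ generic of support $C_i$, take the lowest-order nonzero coefficient of the resultant in $\epsilon$, and show the isolated zeros persist as $\epsilon\to 0$ — the toric generalized characteristic polynomial), or dropping the resultant altogether and bounding $\deg_s(Q)$ as the number of intersection points of the relevant one-dimensional components of $V(G_1,\dots,G_n)\subset\A^{n+1}$ (in the variables $(s,x)$) with a generic hyperplane $\ell(x)=c$, a BKK count with supports $\{0\}\times\Delta_n$ and $([0,d]\times\{0\})+(\{0\}\times C_i)$ that multilinearity reduces to $d\sum_{i}MV_n(\Delta_n,C_1,\dots,C_{i-1},C_{i+1},\dots,C_n)$. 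A secondary, more repairable point: the sparse resultant is by definition the closure of the locus of systems with a common zero in the \emph{torus}, so your assertion that an isolated zero $\zeta(s)$ with a vanishing coordinate forces $R(s,\ell(\zeta(s)))=0$ also needs an argument (it holds here because $0\in C_i$ lets $\A^n$ embed in the associated toric variety, but this must be said).
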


In order to simplify our complexity bounds, we will also use the following technical lemma.

\begin{lemma}\label{lem:MVbound}
Let $C_1, \dots, C_n \subset (\Z_{\ge 0})^n$ be finite sets containing the set $\Delta_n$ of the vertices of the standard unitary simplex of $\R^n$. For $i=0, \dots , n$, let $d_i \in \Z_{\ge 0}$ and set $P_0 = \conv(\{ 0 \} \times \Delta_n; (d_0, \mathbf{0}_n)) $ and $P_i = \conv(\{ 0 \} \times C_i; (d_i, \mathbf{0}_n))\subset \R^{n+1}$ for $i\ge 1$. If $d=\max\{d_i:0\le i\le n\}$, then $MV_{n+1}(P_0, P_1, \dots, P_n) \le d\cdot MV_n(C_1, \dots, C_n)$.
\end{lemma}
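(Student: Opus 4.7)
My strategy is to reduce, via the monotonicity and linear-invariance properties of the mixed volume, to the case $d_0=d_1=\dots=d_n=1$, and then to bound the remaining mixed volume by applying the BKK theorem to an explicit sparse system in which the extra variable admits a clean elimination.

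If $d=0$, every $P_i$ is contained in the hyperplane $\{x_0=0\}\subset\R^{n+1}$, so $MV_{n+1}(P_0,\dots,P_n)=0$ and the inequality is trivial; assume $d\ge 1$. Since $\mathbf{0}_n\in\Delta_n\subseteq C_i$ and $d_i\le d$, the apex $(d_i,\mathbf{0}_n)$ lies on the segment joining $(0,\mathbf{0}_n)$ and $(d,\mathbf{0}_n)$, and hence $P_i\subseteq P_i':=\conv(\{0\}\times C_i\cup\{(d,\mathbf{0}_n)\})$ (with $\Delta_n$ in place of $C_i$ when $i=0$). By monotonicity,
\[MV_{n+1}(P_0,\dots,P_n)\le MV_{n+1}(P_0',\dots,P_n').\]
The linear map $T(x_0,\mathbf{x})=(dx_0,\mathbf{x})$ has determinant $d$ and sends $Q_i:=\conv(\{0\}\times C_i\cup\{(1,\mathbf{0}_n)\})$ to $P_i'$, so $MV_{n+1}(P_0',\dots,P_n')=d\cdot MV_{n+1}(Q_0,\dots,Q_n)$. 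It thus suffices to show that $MV_{n+1}(Q_0,Q_1,\dots,Q_n)\le MV_n(C_1,\dots,C_n)$.

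To prove this last inequality I would invoke the BKK theorem, which identifies $MV_{n+1}(Q_0,\dots,Q_n)$ with the number of isolated zeros in $(\C^*)^{n+1}$ of a generic sparse system $f_0=\dots=f_n=0$ where $f_i$ is supported on $(\{0\}\times C_i)\cup\{(1,\mathbf{0}_n)\}$ (with $\Delta_n$ in place of $C_i$ for $i=0$). Each such $f_i$ has the form $f_i(x_0,\mathbf{x})=g_i(\mathbf{x})+a_ix_0$, with $g_i$ supported on $C_i$ (resp.\ $\Delta_n$) and $a_i\in\C^*$. Solving $f_0=0$ for $x_0=-g_0(\mathbf{x})/a_0$ and substituting into $f_j$ for $j\ge 1$ yields
\[h_j(\mathbf{x})=a_0\,g_j(\mathbf{x})-a_j\,g_0(\mathbf{x}),\quad j=1,\dots,n,\]
whose supports lie in $C_j\cup\Delta_n=C_j$. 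The projection $(x_0,\mathbf{x})\mapsto\mathbf{x}$ is injective on the zero set of the original system in $(\C^*)^{n+1}$, so each such zero gives rise to a distinct zero of $h_1=\dots=h_n=0$ in $(\C^*)^n$; a second use of BKK together with monotonicity then bounds the number of these by $MV_n(C_1,\dots,C_n)$.

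The only subtlety that requires attention is the final step, where one must check that the zeros of the original generic system inject into the set of \emph{isolated} zeros of $h_1=\dots=h_n=0$ in $(\C^*)^n$ so that the upper-bound form of BKK applies. This holds because any positive-dimensional component of $\{h_1=\dots=h_n=0\}$ passing through a point with $g_0\ne 0$ would pull back via $x_0=-g_0(\mathbf{x})/a_0$ to a positive-dimensional component of the original generic system in $(\C^*)^{n+1}$, contradicting the genericity. Everything else is a routine application of standard mixed-volume properties.
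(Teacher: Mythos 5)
Your proof is correct and follows the same overall strategy as the paper's: enlarge each $P_i$ by monotonicity of the mixed volume to a polytope spanned by $\{0\}\times C_i$ and a common apex, and then interpret the resulting mixed volume via the BKK theorem for a generic sparse system in which the extra variable $x_0$ can be eliminated. The execution differs in two minor points. You first apply the scaling map $T(x_0,\mathbf{x})=(dx_0,\mathbf{x})$ to reduce to the apex $(1,\mathbf{0}_n)$ and then eliminate $x_0$ by substitution, which forces you to justify that the projected zeros are isolated; the paper instead keeps the apex $(d,\mathbf{0}_n)$, writes the generic system as $x_0^d+\ell(\mathbf{x})=0$, $x_0^d+g_i(\mathbf{x})=0$, and subtracts the first equation from the others, so that $x_0$ decouples immediately and the count $d\cdot MV_n(C_1,\dots,C_n)$ appears as an equality with no isolatedness argument needed. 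Your isolatedness check is sound, but you could have sidestepped it by observing that, since $\Delta_n\subseteq C_j$ and the coefficients are generic, the tuple $(h_1,\dots,h_n)$ is itself a generic sparse system with supports $C_1,\dots,C_n$, so its zero set in $(\C^*)^n$ is automatically finite.
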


\begin{proof} Let $\{ e_0,e_1,\dots, e_n\}$ be the canonical basis of $\R^{n+1}$.
We have that $P_0 \subset Q_0:=\conv(0, e_1, \dots, e_n, de_0)$ and, for $i=1, \dots, n$, $P_i \subset Q_i:=\conv(\{0\} \times C_i,de_0)$ and so, due to the monotonicity of the mixed volume, the following inequality holds:
$$MV_{n+1}(P_0, P_1, \dots, P_n) \le MV_{n+1}(Q_0,Q_1, \dots, Q_n).$$
The mixed volume in the right hand side of the above inequality counts the number of common zeros of a generic sparse system of polynomials in $n+1$ variables with supports $\{0, e_1, \dots, e_n, de_0\}$ and $(\{ 0\} \times C_i) \cup \{ d e_0\}$ for $i=1, \dots, n$; that is, a system of the form
\begin{equation*}
\left\{
\begin{array}{cc}
   x_0^d+ \ell(x)& = 0 \\
   x_0^d + g_1(x)  & = 0 \\
   \vdots \\
    x_0^d + g_n(x)  & = 0
\end{array}
\right.
\end{equation*}
where $\ell(x)$ is a generic polynomial of degree $1$ and $g_1, \dots, g_n$ are generic polynomials  with supports $C_1, \dots, C_n$ in the variables $x=(x_1,\dots, x_n)$. This polynomial system is equivalent to
\begin{equation}\label{eq:systemMV}
\left\{
\begin{array}{cc}
   x_0^d+ \ell(x) & = 0 \\
   g_1(x) - \ell(x)& = 0 \\
   \vdots \\
    g_n(x)-  \ell(x)  & = 0
\end{array}
\right.
\end{equation}
By assumption, for $i=1, \dots, n$, $C_i$ contains the support of $\ell(x)$; then, $g_i(x) - \ell(x)$ is a generic polynomial with support $C_i$. Noticing that the variable $x_0$ only appears in the first equation, we conclude that the number of common solutions to the system \eqref{eq:systemMV} equals $d \cdot MV_n(C_1, \dots, C_n)$.
\end{proof}

\subsection{Algorithmic deformation to classify points}

In this section, we will present a probabilistic algorithm that, given a finite set of points $\cP \subset V$, computes, for $k=n-1,\dots, 0$, the set $\cQ_k$ consisting of all points in $\cP$ lying in the equidimensional component $V_k$ of $V$ and in no equidimensional component of $V$ of a larger dimension.

For simplicity, we assume that the number of polynomials defining $V$ is $m=n$.

Otherwise, by taking $n$ generic linear combinations of $f_1,\dots, f_m$, we may obtain a polynomial system $\widetilde{\bff} = (\widetilde{f}_1,\dots, \widetilde{f}_n)$ such that $V(\widetilde{\bff})$ has the same equidimensional components as $V$ except, possibly, for the zero-dimensional one, which may contain some points not lying in $V$. Thus, we may apply the algorithm to $\widetilde{f}_1,\dots, \widetilde{f}_n$ and remove the additional points where the system $\bff$ does not vanish. We point out that the support of the polynomials $\widetilde{f}_1,\dots, \widetilde{f}_n$ is the union of all the supports of $f_1,\dots, f_m$ and so, mixed volumes associated to this set will appear in the complexities of our algorithms.

The mixed volume associated to $n$ finite sets in $\Z^n$ can be computed as the sum of the $n$-dimensional volumes of the
convex hulls of all the \emph{mixed cells} in a fine mixed subdivision. Such a subdivision can be obtained by
means of a standard lifting process (see \cite[Section 2]{HS1995} and also \cite{Mizutani2007} for a faster algorithm computing mixed cells). Some intermediate computations of our algorithms deal with
lifting functions and mixed cells of prescribed families of sets. We will assume that they have been produced by a preprocessing, whose cost is not taken into account in our complexity estimates. Furthermore, for the sake of simplicity, we will not make explicit in our complexity estimates the dependence on the size of the objects associated to the lifting process.

\bigskip

Assume the input finite set $\cP\subset V$ is given by a shape-lemma representation
\[\cP = \{ \bfp \in \A^{n+1}\mid \bfp= (v_1(\theta), \dots, v_n(\theta)) ; M(\theta) = 0\}\]
where $M(x_0) = \prod_{\bfp \in \cP} (x_0 - \ell_0(\bfp))\in \Q[x_0]$ for a separating linear form $\ell_0\in \Q[x_0]$ (i.e. $\ell_0(\bfp) \ne \ell_0(\bfp')$ for $\bfp \ne \bfp'$ in $\cP$) and $\bfv(x_0)= (v_1(x_0), \dots, v_n(x_0))$ is a vector of $n$ polynomials in $\Q[x_0]$ such that $\bfv(\ell_0(\bfp)) = \bfp$ for every $\bfp \in \cP$.

For every $\bfp\in \cP$, let $\theta_{\bfp}= \ell_0(\bfp)$. Then, $\theta_\bfp \ne \theta_{\bfp'}$ for $\bfp \ne \bfp'$ as required in Section \ref{subsec:finiteset}.
With this choice, for a fixed $k$ with $1\le k \le n-1$, we take linear affine polynomials $\ell_1,\dots, \ell_k\in \Q[x_1,\dots, x_n]$ randomly and consider the algebraic varieties defined as in Section \ref{subsec:finiteset}:
\begin{itemize}
    \item $\Theta= \{ \theta\in \C \mid M(\theta) = 0\}$
    \item $V_{\cP} = \{ (x_0,\bfx) \in \A^{n+1} \mid M(x_0) = 0, f_1(\bfx) = 0, \dots, f_m(\bfx) = 0\}$
    \item $\cL_\cP = \{(x_0,\bfx,s) \in \A^{n+2} \mid M(x_0) = 0, \ell_i(\bfx) -(1-s) \ell_i(\bfv(x_0)) = 0 \hbox{ for all } 1\le i \le k\}$
\end{itemize}
In particular, $\widehat V_{\cP} \cap \cL_\cP$ is the set of solutions of the system
\begin{equation}\label{eq:defsystem}
 M(x_0) = 0, \ f_1(\bfx) = 0, \dots, f_m(\bfx) = 0,\  \ell_i(\bfx) -(1-s) \ell_i(\bfv(x_0)) = 0 \ (1\le i \le k).
 \end{equation}

\bigskip

We will now present our main subroutine which computes a subset $\cQ$ of the given finite set $\cP$ that contains all the points in $V_k \cap \cP$ and is contained in $\bigcup_{j\ge k} V_j$.

In the sequel, \texttt{SparseSolving} refers to the main algorithm from \cite{JMSW2009} adapted to deal with polynomials with coefficients in $\Q(s)$. Given an input system of $n$ polynomials in $n$ variables in sparse representation, it computes a finite set of points containing the isolated points in $\overline{\Q(s)}^n$ of the variety defined by the system. In order to keep a  better control of the degrees in the parameter $s$, we modify slightly the last step of the algorithm so that the output is in Kronecker form. Intermediate computations are done with truncated power series in $s$ and  rational functions are reconstructed by means of Pad\'e approximation.

The algorithm \texttt{SparseSolving} chooses a linear form at random in order to work with a \emph{separating} linear form.  In our computations, we will need to work with a \emph{well-separating} linear form (\cite[Section 12.5]{bpr}), which is a stronger, but still generic  condition (see \cite[Lemma 12.44]{bpr} and its proof) that will enable us to recover a geometric resolution at $s=0$ from the output of the algorithm.

Following the notation in \cite[Section 12.5]{bpr}, for a polynomial $p\in \Q(s)[Y]$, we write $\lim\limits_{s\to 0}(p)$ for the polynomial which is obtained from $p(Y)$ by evaluating at $s=0$ the coefficients of the polynomial $s^{-o}p(Y)$, where $o$ is the minimum of the orders of the coefficients of $p(Y)$.

\bigskip\newpage

\noindent \hrulefill

\noindent Algorithm \texttt{DiscardLowerDim}

\medskip
\noindent
INPUT: polynomials $f_1, \dots, f_n\in \Q[x_1,\dots, x_n]$ in sparse representation defining a variety $V\subset\A^n$, univariate polynomials $M, v_1, \dots, v_n$ in $\Q[x_0]$ representing a  finite set of points $\cP\subset V$ in shape-lemma form, an integer $k$ with $0\le k \le n-1$.

\medskip
\noindent
OUTPUT: univariate polynomials $\overline{M}, \overline{v}_{1}, \dots, \overline{v}_{n}$ in $\Q[x_0]$, such that $\overline{M}$ is a factor of $M$, representing a subset $\cQ\subset \cP$ that satisfies $\left(V_k\setminus\left( \bigcup_{j>k} V_j\right) \right)\cap \cP \subset \cQ \subset \left(\bigcup_{j\ge k} V_j \right)\cap \cP$.

\begin{enumerate}\itemsep=0pt
    \item  Choose randomly $k$ linear affine polynomials $\ell_1, \dots, \ell_k \in \Q[x_1,\dots, x_n]$.
    \item Set 
    \begin{itemize}\itemsep=0pt
        \item $S_0(x_0,x) = M(x_0)$,
        \item for $l=1, \dots, n$, $S_l (x_0,x)= f_l(x)$,
        \item for $j=1,\dots k$, $S_{n+j} (x_0,x)= \ell_j(x) -(1-s) \ell_j(\bfv(x_0)) $.
    \end{itemize}
    \item Take $n+1$ linear combinations of the polynomials $S_0,S_1, \dots, S_{n+k}$ of the form
    $F_l(x_0,x) = S_l(x_0,x) +\sum_{1\le j \le k} a_{lj} S_{n+j}(x_0,x)$, for $l=0,\dots n$,
    with randomly chosen coefficients $(a_{lj })_{0\le l \le n, 1\le j\le k}$.

    \item Apply the algorithm \texttt{SparseSolving} to the system $\bfF=(F_0, F_1, \dots, F_n)$ to compute a geometric resolution  $(m(Y), \bfw(Y))$  in $\Q(s)[Y]$ of a finite set of points $\widetilde{\cQ}$ of the variety  $V(\bfF) \subset \A^{n+1}(\overline{\Q(s)})$ containing its isolated points.

    \item  Compute a geometric resolution of $\widetilde{\cQ}\cap V(S_0, \dots, S_{n+k})$:
    \begin{itemize}\itemsep=0pt
        \item $\delta= \max \{ \deg(f_1), \dots, \deg(f_n), \deg(M)\}$,
        \item for $l=0, \dots, n+k$, $S_l^H(T,x_0,x)= T^{\delta} S_l\left(\frac{1}{T} (x_0,x)\right)$ (here $T$ is a new variable),
        \item $\overline{m}(Y)= \gcd(m(Y), S_l^H(\frac{\partial m}{\partial Y},\bfw(Y)); 0\le l \le  n+k)$ and $\widehat{m}(Y) = m(Y)/\overline{m}(Y)$,
        \item $\overline{\mu}(Y),\widehat{\mu}(Y) \in \Q(s)[Y]$ such that $\overline{\mu}(Y) \overline{m}(Y) + \widehat{\mu}(Y) \widehat{m}(Y) =1$,
        \item for $i=0, \dots, n$, $\overline w_i(Y) = \Rem\left(\widehat{\mu}(Y) w_i(Y) , \overline{m}(Y)\right)$.
    \end{itemize}

    \item Compute $(m_0(Y), \bfw_0(Y)) = \lim\limits_{s\to 0} \left( \overline{m}(Y),\overline{\bfw}(Y)\right)$.

    \item Clean multiplicities and obtain polynomials $\overline{m}_0(Y), \vartheta_0(Y), \vartheta_1(Y), \dots, \vartheta_n(Y)$ in $\Q[ Y]$ providing a geometric resolution
    of a finite set $\widehat{\mathcal{Q}}= \{ (\vartheta_0(y),\vartheta_1(y),\dots, \vartheta_n(y)) \mid \overline{m}_0(y) = 0\}$ as follows:
    \begin{enumerate}\itemsep=0pt
        \item Compute $\gcd(m_0(Y), m_0'(Y))$ and polynomials $P_0(Y)$, $P_1(Y)\in \Q[Y]$ such that $P_0(Y) m_0(Y) + P_1(Y) m_0'(Y) = \gcd(m_0(Y), m_0'(Y))$.
        \item Compute:
        \begin{itemize}\itemsep=0pt
            \item $\overline{m}_0(Y) = m_0(Y)/\gcd(m_0(Y), m_0'(Y))$,
            \item for $i=0, \dots, n$, $\vartheta_i (Y)= \Rem\left(P_1(Y) (w_{0,i}(Y)/\gcd(m_0(Y), m_0'(Y))), \overline{m}_0(Y)\right) $.
        \end{itemize}
    \end{enumerate}

    \item Obtain polynomials $\overline{M}(x_0), \overline{v}_1(x_0), \dots, \overline{v}_n(x_0)$ providing a geometric resolution of $\cQ = \pi_x(\widehat{\cQ}) \cap \cP$ by computing:
    \begin{enumerate}\itemsep=0pt
        \item $q(Y) = \gcd( \overline{m}_0(Y), v_i(\vartheta_0(Y)) - \vartheta_i(Y); 1\le i \le n)$,
        \item $\overline{M}(x_0) = \mbox{Res}_Y(x_0-\vartheta_0(Y), q(Y))$,
        \item for $i=1, \dots, n$, $\overline{v}_{i}(x_0) = \Rem(v_i(x_0),\overline{M}(x_0))$.
    \end{enumerate}
    \end{enumerate}
\noindent \hrulefill

\bigskip
\noindent\emph{Proof of correctness.}
 Provided that the linear affine polynomials $\ell_1, \dots, \ell_k$ chosen in Step 1 are sufficiently generic, the hypothesis (H1), (H2), (H3) and (H4) from Sections \ref{sec:onepoint} and \ref{subsec:finiteset} hold.
The polynomials $S_0,S_1, \dots, S_{n+k}$ constructed in Step 2 form the defining polynomial system \eqref{eq:defsystem} of the intersection $\widehat{V}_\cP \cap \cL_\cP$ for the varieties introduced in Section \ref{subsec:finiteset}.
In Steps 3--5, we consider them as polynomials in the variables $x_0, x$ with coefficients in $\Q[s]$.

In Step 3, by taking $n+1$ generic linear combinations of the polynomials in $\bfS=(S_0,S_1, \dots, S_{n+k})$, we get a new system of polynomials $\bfF=(F_0,\dots, F_n)$ in $n+1$ variables defining the same variety as $\bfS$ in $\A^{n+1}(\overline{\Q(s)})$ except, possibly, for some additional isolated points that might not be zeros of $\bfS$.

Then, in Step 4, the algorithm \texttt{SparseSolving} computes a geometric resolution of a finite set $\widetilde{\cQ}$ containing all the isolated points of $V(\bfS) $.

In Step 5, the algorithm produces a geometric resolution in Kronecker form of the subset $\overline{\cQ}= \widetilde{\cQ}\cap V(\bfS)$. To this end, we first compute the factor $\overline{m}(Y)$ of $m(Y)$ corresponding to the values of the parameter $Y$ such that $\bfw(Y)$ is associated with a point in $V(\bfS)$ and, then, we modify accordingly the polynomials that provide the Kronecker parameterization of the points.

Note that, since the polynomials $\bfS$ define the variety  $\widehat V_{\cP} \cap \cL_\cP$ (see \eqref{eq:defsystem}) and $\cZ_\cP$ is the union of all irreducible components $\mathcal{C}$ of this variety with $I(\mathcal{C}) \cap \Q[s] = \{ 0\}$, the equidimensional components of $V(\bfS)\subset \A^{n+1}(\overline{\Q(s)})$ are in one to one correspondence with the equidimensional components of $\cZ_\cP\subset \A^{n+2}$.
In particular, the set of isolated points of $V(\bfS)$ corresponds to the equidimensional component $\cZ_{\cP,1}$ of dimension $1$.
Now, since the finite set $\overline{\cQ}$ contains the isolated points of $V(\bfS)$, it corresponds to an equidimensional subvariety $\cW$ of $\cZ_\cP$ of dimension $1$ that contains $\cZ_{\cP,1}$.

Step 6 of the algorithm produces univariate polynomials associated with the intersection $\cW \cap \{ s =0\}$ provided that the linear form used in the computations of Step 4 is well-separating for $\overline{\cQ}$.
More precisely, following \cite[Section 12.5]{bpr}, the algorithm determines the limits when $s\to 0$ of the bounded points of $\overline{\cQ}$.

By \cite[Lemma 12.37]{bpr}, the roots of $m_0(Y) = \lim\limits_{s\to 0}(\overline{m}(Y))$ are the limits $\lim\limits_{s\to 0}(\gamma)$ of the bounded roots $\gamma\in \C\langle s \rangle$ of $\overline{m}(Y)$. Moreover, every root $y\in \C$ of $m_0$ has a multiplicity equal to the number of roots $\gamma$ of $\overline{m}(Y)$ such that  $\lim\limits_{s\to 0}(\gamma)= y$. Thus, the polynomial $m_0(Y)$ may not be square-free; so, in Step 7 we clean multiplicities and obtain the square-free polynomial $\overline{m}_0(Y)$ having the same roots as $m_0$.

Now, if $\overline{\bfw} = (\overline{w}_0, \overline{w}_1, \dots, \overline{w}_n)$ and $\bfw_0=(w_{0,0}, \dots, w_{0,n})$, from \cite[Lemma 12.43]{bpr} and the genericity of the linear forms involved in the computed geometric resolutions, we conclude that,
for every bounded root $\gamma$ of $\overline{m}$, if $\lim\limits_{s\to 0}(\gamma)= y$ and the multiplicity of $y$ as a root of $m_0$ is $\mu$,
the $i$th coordinate of the corresponding limit point equals
\[\lim_{s\to 0} \left(\dfrac{\overline{w}_i(\gamma)}{\frac{\partial\overline{m}}{\partial Y} (\gamma)} \right) = \dfrac{{w}_{0,i}^{(\mu-1)}(y)}{m_0^{(\mu)}(y)}. \]
Following \cite[Proposition 10]{GLS01}, it is not difficult to see that $\gcd(m_0, m_0')$ divides $w_{0,i}$ for $i=0, \dots, n$ and the polynomials
\[q_0 = \dfrac{m_0'}{\gcd(m_0, m_0')}, \quad \overline{w}_{0,i} = \dfrac{w_{0,i}}{\gcd(m_0, m_0')}, \ i=0, \dots, n\]
satisfy:
\[\dfrac{{w}_{0,i}^{(\mu-1)}(y)}{m_0^{(\mu)}(y)} = \dfrac{\overline{w}_{0,i}(y)}{q_0(y)}, \ i=0, \dots, n,\]
for every root $y\in \C$ of $m_0$. Finally, since the polynomials $P_0$ and $P_1$ computed in Step 7(a) satisfy $P_0 \overline{m}_0+ P_1 q_0 =1 $, it follows that, for every root $y\in \C$ of $\overline{m}_0$, the equality $P_1(y) = q_0(y)^{-1}$ holds.

Therefore, the polynomials $\overline{m}_0(Y), \vartheta_0(Y), \dots, \vartheta_n(Y)$ computed in Step 7 form a shape-lemma representation of a finite set of points $\widehat{\cQ}= \{ (\vartheta_0(y),  \dots, \vartheta_n(y)) \mid \overline{m}_0(y) =0\}$ such that $\widehat{\cQ} \times \{ 0\} \subset \cZ_\cP \cap \{s =0\}$ and that contains all the points $(\theta_\bfp, \bfp)$ with $(\theta_\bfp, \bfp,0)\in \cZ_{\cP, 1}$.

Finally, in Step 8 the algorithm determines the points $(x_0,x)$ of $\widehat{\cQ}$ such that $x$ is a point in the input set $\cP$, that is, it computes $\cQ =\pi_x(\widehat{\cQ}) \cap \cP$.
The points in $\widehat{\cQ}$ satisfy
$ x_i = \vartheta_i(y)$, $i=0,\dots, n$,
for $y\in \C$ such that $\overline{m}_0(y) = 0$. For such a point we have that $x\in \cP$ if and only if, in addition, $y$ satisfies that
$ x_i = v_i(\vartheta_0(y))$, for $i=1,\dots, n.$
Hence, these points correspond to the roots $y$ of $\overline{m}_0(Y)$ such that
\[v_i(\vartheta_0(y)) = \vartheta_i(y) , \quad i=1, \dots, n,\]
namely, the roots of the polynomial
$q(Y) = \gcd( \overline{m}_0(Y), v_i(\vartheta_0(Y)) - \vartheta_i(Y); 1\le i \le n)$ computed in Step 8(a). In Step 8(b), the algorithm obtains the polynomial $\overline{M}(x_0)$ which is the factor of $M(x_0)$ whose roots correspond to the points in $\cQ$,  and, finally, in Step 8(c), it reduces modulo $\overline{M}$ the parameterizations of the input geometric resolution in order to obtain a geometric resolution of $\cQ$. From Proposition \ref{prop:defseveralpoints},
we deduce that $\pi_x(\widehat{\cQ}) \subset \bigcup_{j\ge k} V_j$ and contains all the points in $\cP\cap V_k$ that do not lie in $\bigcup_{j> k} V_j$; therefore, the set $\cQ$ satisfies the desired conditions.
$\square$

\bigskip

\noindent \emph{Complexity analysis.}
Let $\cA_1, \dots, \cA_n\subset (\Z_{\ge 0})^n$ be the supports of $f_1, \dots, f_n$ respectively, and $\delta$ an upper bound for the degrees of $M, f_1, \dots, f_n$.

For $n\in \N$, we write $\mathbf{0}_{n}$ for the null vector with $n$ coordinates.

The supports of the polynomials $S_l\in \Q(s)[x_0,x_1, \dots, x_n]$ introduced in Step 2 of the algorithm satisfy:
\begin{itemize}\itemsep=0pt
    \item $\supp(S_0) \subset \{(\alpha,\mathbf{0}_n): 0\le \alpha \le \delta\}$;
    \item for $l=1,\dots, n$, $\supp(S_l)=\{0 \} \times \cA_l$;
    \item for $j=1, \dots, k$,  $\supp(S_{n+j}) \subset (\{0\} \times \Delta_n) \cup \{(\alpha,\mathbf{0}_n): 0\le \alpha \le \delta-1\}$
\end{itemize}

Then, the polynomials $F_0, \dots, F_n$ defined in Step 3 form a sparse polynomial system in $\Q(s)[x_0, \dots, x_n]$ supported on $\cB_0, \dots, \cB_n$, where
\begin{align*}
\cB_0&=  (\{0\} \times \Delta_n) \cup \{(\alpha,\mathbf{0}_n): 0\le \alpha \le \delta\}  \subset (\Z_{\ge 0})^{n+1},\\
\cB_l &= (\{ 0\} \times (\cA_l \cup \Delta_n))  \cup \{(\alpha,\mathbf{0}_n): 0\le \alpha \le \delta-1\} \subset (\Z_{\ge 0})^{n+1}, \quad \hbox{for } l=1,\dots, n.
\end{align*}
Note that $|\cB_0| = 1+n+\delta$ and, for $l=1, \dots, n$, $| \cB_l| \le | \cA_l| +n+\delta$. We can obtain a sparse representation of $F_0, \dots, F_n$ within complexity $O(n^2(n+\delta))$.

Algorithm \texttt{SparseSolving} works in two stages: first, it computes a geometric resolution of a generic sparse system of polynomials with supports $\cB_0, \dots, \cB_n$ and then, it makes a homotopic deformation to obtain a geometric resolution of the isolated zeros of the given system $F_0, \dots, F_n$. Up to a factor depending on the computation of a suitable subdivision of the family of supports, by \cite[Proposition 5.13]{JMSW2009}, the complexity of the first stage (over $\Q$) is of order
$$O((n^3N_{\cB}\log(\mathcal{Q_\cB})+n^{1+\Omega})M(D_{\cB})
(M(D_{\cB}) + M(E_{\cB}))),$$
where $N_{\cB} = \sum_{i=0}^n|\cB_i|$, $\mathcal{Q}_{\cB} = \max\{||b||: b \in \cB_i, i=0, \dots n\}$,
\[D_\cB = MV(\cB_0, \dots, \cB_n), \hbox{ and } E_{\cB} = \sum_{i=0}^n MV(\Delta_{n+1}, \cB_0, \dots, \cB_{i-1}, \cB_{i+1}, \dots, \cB_n)\] (see  Lemma \ref{lem:deg_minimal} above), and according to \cite[Proposition 6.1 and Section 6.2]{JMSW2009}, the deformation stage can be achieved within complexity
$O( (n^2N_\cB\log(\mathcal{Q_\cB})+n^{\Omega+1})M(D_\cB)M(E_\cB))$
over the base field $\Q(s)$.
Now, if $H_\cB$ is an upper bound for the degrees of the numerators and the denominators of the coefficients of the polynomials in the geometric resolution we are looking for, the complexity over $\Q$ of the deformation stage is bounded by
$$O( (n^2N_\cB\log(\mathcal{Q_\cB})+n^{1+\Omega})M(D_\cB)M(E_\cB)M(H_\cB)).$$
Following \cite[Section 2.3]{JMSW2009}, we can take $H_\cB$ to be an upper bound for the degree in the parameter $s$ of the coefficients of a primitive minimal polynomial for a generic linear form over the set of isolated roots in $\overline{\Q(s)}^{n+1}$ of the system $F_0, \dots, F_n$.

Since $\deg_s(F_l) =1$, Lemma \ref{lem:deg_minimal} implies that the geometric resolution $(m(Y), \bfw(Y))$ consists of polynomials in $\Q(s)[Y]$ with $\deg_Y(m) ,\deg_Y(w_i) \le D_{\cB} $ and coefficients whose numerators and denominators have degrees in $s$ bounded by $E_\cB$. Therefore, $H_\cB\le E_\cB$ and the overall complexity of Step 4 is
$$O((n^3N_{\cB}\log(\mathcal{Q_\cB})+n^{1+\Omega})M(D_{\cB})
(M(D_{\cB}) + M(E_{\cB})^2)).$$

Now, in Step 5 the algorithm applies the Extended Euclidean Algorithm to $m(Y)$ and a generic linear combination $\mathcal{S}(Y)$ of the polynomials $S_l^H(\frac{\partial m}{\partial Y}(Y), \bfw(Y))$, $l=0, \dots, n+k$. These polynomials have degrees in $Y$ bounded by $\delta D_{\cB}$.

In order to obtain the coefficients of $\mathcal{S}$, we apply a fast interpolation algorithm.
We first construct an slp that evaluates $\widehat{\mathcal{S}}(T,x_0,x) = \sum_{l=0}^{n+k} c_l S_l^H(T,x_0,x)$ for randomly chosen constants $c_l$ (here, $S_l^H(T,x_0,x)= T^{\delta} S_l(\frac{1}{T}(x_0,x))$ is the homogenization of $S_l$ at degree $\delta$).
Note that $\sum_{l=n+1}^{n+k} c_l S_l(x_0,x) = \ell(x) - (1-s) \ell(\bfv(x_0))$ for a suitable linear form $\ell$ (which we can compute with $O(nk)$ operations in $\Q$) and so, it can be evaluated by means of an slp of length $O(n \delta)$; the same holds for its homogenization up to degree $\delta$. On the other hand, since each of the polynomials $f_j$ can be evaluated by an slp of length $O(n \log(\deg(f_j)) |\cA_j|)$, we have an slp of length $O(n \log(\delta) N)$ for $\sum_{l=1}^n c_l S_l^H(T,x_0,x)$. Therefore, we obtain an slp of length $O(n\delta + n\log(\delta) N )$ for $\widehat{\mathcal{S}}(T,x_0,x)$.
Now we evaluate the polynomials $\frac{\partial m}{\partial Y}(Y), w_0(Y), \dots, w_n(Y)$ (whose degrees are bounded by $D_{\cB}$) in $\delta D_{\cB}$ points within $O(n \delta M(D_{\cB}))$ arithmetic operations in $\Q(s)$ and then, we evaluate $\widehat{\mathcal{S}}(T,x_0,x)$ at the points obtained with $O((n \delta + n\log(\delta) N ) \delta D_{\cB})$ additional steps. Finally, we recover the coefficients of $\mathcal{S}(Y)$ by means of fast interpolation within  $O(M(\delta D_{\cB}))$ operations in $\Q(s)$.

The numerators and denominators of the coefficients of $\overline{m}(Y),\overline{\bfw}(Y), \dots, \overline{\bfw}_n(Y) $ are polynomials in $s$ with degrees bounded by $E_{\cB}$. We will compute them by means of Pad\'e approximation.
Then, the total number of arithmetic operations in $\Q$ to obtain the polynomial $\mathcal{S}(Y)$ is
$$O((n\delta M(D_{\cB})+n\delta D_{\cB}(\log(\delta) N+ \delta)+M(\delta D_{\cB}))M(E_{\cB})).$$

The gcd computation can be achieved within $O(M(\delta D_{\cB}) M(E_{\cB}))$ arithmetic operations in $\Q$ from the dense representation of $m(Y)$ and $\mathcal{S}(Y)$.
All the remaining computations in Step 5 are divisions with remainder and gcd calculations with polynomials of degree bounded by $D_\cB$ in the variable $Y$ and, therefore, they can be done within complexity $O(nM(D_{\cB})M(E_{\cB}))$.

Finally, we reconstruct all the coefficients of the polynomials $\overline{m}(Y),\overline{\bfw}(Y), \dots, \overline{\bfw}_n(Y) $ by Pad\'e approximation within complexity $O(n D_{\cB} M(E_{\cB}))$.

Therefore, the overall complexity of Step 5 is bounded by
\[O(n \delta^2 N M(D_{\cB}) M(E_{\cB}) ).\]

In Step 6 the algorithm first determines $\varrho=\ord(\overline{m}(Y))$ as the minimum order of a coefficient of $\overline{m}$ by looking, for each coefficient, the minimum power of $s$ effectively appearing in its numerator and in its denominator. Then,  $m_0(Y)$ and $\bfw_0(Y)$ are obtained as
\[m_0(Y) = (s^{-\varrho} \, \overline{m}(Y) )|_{s=0}\quad  \hbox{and} \quad w_{0,i} (Y) =(s^{-\varrho}\, \overline{w}_{0,i}(Y))|_{s=0}, \ i=0, \dots, n,\]
that is, by evaluating each coefficient at $s=0$. This step only involves
$O(nD_B E_B)$
comparisons in $\Q$.

Step 7(a) is performed by applying the EEA to $m_0$ and $m_0'$ in $\Q[Y]$. Since $\deg(m_0) \le D_{\cB}$, the complexity of the computation is $O(M(D_{\cB}))$. The cost of the multiplications and divisions with remainder in Step 7(b) is of order $O(n M(D_{\cB})/\log(D_{\cB}))$. So, the total number of arithmetic operations required in this step is at most $O(nM(D_\cB))$.

Finally, in Step 8, the algorithm first computes the coefficients of the polynomials $v_i(\vartheta_0(Y))- v_i(Y)$, for $i=1,\dots, n$, by means of a fast multipoint evaluation and interpolation algorithm with $O(nM(\delta D_{\cB}))$ arithmetic operations. The gcd computations involved in Step 8(a) can be done recursively within the same complexity order. The polynomial $\overline{M}(x_0)$ in Step 8(b) has degree at most $\delta$; hence, it can be computed by interpolation in $\delta$ different values $x_0$, computing the corresponding resultant for each of them. This requires $O(\delta M(D_{\cB}) +  M(\delta))$ operations. Step 8(c) involves $n$ divisions of polynomials of degrees at most $\delta$, which can be computed within $O(n M(\delta)/\log(\delta))$ operations. The total complexity of Step 8 is bounded by
$O(nM(\delta D_{\cB}))$.

Before adding up the complexities of the intermediate steps in order to get an upper bound for the overall complexity of the algorithm, we will bound the parameters appearing in the previous estimates:
\begin{itemize}\itemsep=0pt
    \item $N_{\cB} = \sum_{l=0}^n |\cB_l| \le 1+(n+1)(n+\delta) + N$,
    \item $\mathcal{Q}_{\cB} \le \delta$ (since for every $a=(a_1, \dots, a_n)\in (\Z_{\ge 0})^n$,  $||a|| \le a_1+\cdots + a_n$),
    \item $D_{\cB}= MV_{n+1}(\cB_0, \cB_1, \dots, \cB_n) \le \delta \cdot MV_n(\cA_1 \cup \Delta_n, \dots, \cA_n\cup \Delta_n)$ (see Lemma \ref{lem:MVbound}),
    \item $E_{\cB}= \sum_{i=0}^n MV_{n+1}(\Delta_{n+1}, \cB_0, \dots, \cB_{i-1}, \cB_{i+1}, \dots, \cB_n) \le
    \delta \cdot( MV_n(\cA_1 \cup \Delta_n, \dots, \cA_n\cup \Delta_n) +   \sum_{i=1}^n MV_n(\Delta_n, \cA_1 \cup \Delta_n, \dots, \cA_{i-1} \cup \Delta_n, \cA_{i+1} \cup \Delta_n, \dots, \cA_n\cup \Delta_n))$ (see Lemma \ref{lem:MVbound}).
\end{itemize}
Setting
\begin{align}
    \cD &:=  MV_n(\cA_1 \cup \Delta_n, \dots, \cA_n\cup \Delta_n), \label{eq:defD}\\
    \cE &:=\cD +\sum_{i=1}^n MV_n(\Delta_n, \cA_1 \cup \Delta_n, \dots, \cA_{i-1} \cup \Delta_n, \cA_{i+1} \cup \Delta_n, \dots, \cA_n\cup \Delta_n), \label{eq:defE}
\end{align}
we conclude that the overall complexity of Algorithm \texttt{DiscardLowerDim} is of order
\begin{center}
\hfill $O(n^4 \delta \log(\delta) N M(\delta \cD ) M(\delta \cE)^2).$
 \hfill $\square$
\end{center}


\medskip

Therefore, we have proved the following:

\begin{proposition}\label{prop:DLD}
Let $f_1, \dots, f_n\in \Q[x_1,\dots, x_n]$ be polynomials supported on $\cA_1, \dots, \cA_n$ in $(\Z_{\ge 0})^n$ that define an algebraic variety $V\subset\A^n$, and let $\cP \subset V$ be a finite set given by a geometric resolution $M, v_1, \dots, v_n$ in $\Q[x_0]$.
\texttt{DiscardLowerDim} is a probabilistic algorithm that, for a fixed $k$, $0\le k \le n-1$,  computes
a geometric resolution $\overline{M}, \overline{v}_{1}, \dots, \overline{v}_{n}$ in $\Q[x_0]$ (with  $\overline{M}$ a factor of $M$) for a subset $\cQ\subset \cP$ such that $\left(V_k\setminus\big( \bigcup_{j>k} V_j\big) \right)\cap \cP \subset \cQ \subset \left(\bigcup_{j\ge k} V_j \right)\cap \cP$. The complexity of the algorithm is of order $O(n^4 \delta \log(\delta) N \, M(\delta \cD ) M(\delta \cE)^2)$, where $\delta$ is an upper bound for the degrees of the polynomials $M, f_1,\dots, f_n$; $N= \sum_{l=1}^n |\cA_l|$, and $\cD$ and $\cE$ are the parameters defined in Equations \eqref{eq:defD} and \eqref{eq:defE}, respectively.
\end{proposition}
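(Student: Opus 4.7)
The plan is to verify the two assertions of the proposition (correctness and complexity bound) by tracing through the algorithm \texttt{DiscardLowerDim} and invoking the theoretical results already established. For correctness, the key observation is that Steps 1--2 precisely build the polynomial system \eqref{eq:defsystem} whose variety is $\widehat{V}_\cP \cap \cL_\cP$ from Section \ref{subsec:finiteset}. Thus, once one chooses $\ell_1, \dots, \ell_k$ generically enough so that hypotheses (H1)--(H4) are satisfied, Proposition \ref{prop:defseveralpoints} is the tool that identifies the equidimensional component $\cZ_{\cP,1}$ with the points of $\cP$ lying in $V_k$ but in no higher component.

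The strategy for the correctness proof is then to show that the algorithm successively extracts algebraic data associated to $\cZ_{\cP,1} \cap \{s=0\}$ from the deformation system. First I would argue that Step 3 replaces the system $\bfS$ by a square system $\bfF$ with the same variety up to possibly extra isolated points, a standard genericity argument. Step 4 applies \texttt{SparseSolving} to produce a geometric resolution of a finite superset of the isolated points of $V(\bfF)$ over $\overline{\Q(s)}$. In Step 5, I would explain the gcd trick: the polynomial $\overline{m}(Y)$ selects exactly the factor corresponding to points that actually satisfy every $S_l$, by using the homogenizations $S_l^H$ evaluated at the Kronecker parameterization. Because the equidimensional components of $V(\bfS) \subset \A^{n+1}(\overline{\Q(s)})$ correspond to those components $\cC$ of $\widehat{V}_\cP \cap \cL_\cP$ with $\overline{\pi_s(\cC)} = \A^1$, i.e.\ to $\cZ_\cP$, the finite set of isolated $\overline{\Q(s)}$-points matches $\cZ_{\cP,1}$. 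Steps 6--7 then take the limit at $s=0$ using the bounded-root machinery of \cite[Section 12.5]{bpr} together with the genericity that the working linear form is well-separating, and clean multiplicities via \cite[Proposition 10]{GLS01}. Step 8 intersects the projection $\pi_x(\widehat{\cQ})$ with $\cP$ by a gcd check of the parameterizations, yielding $\cQ$ with the stated sandwich property by Proposition \ref{prop:defseveralpoints}.

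For the complexity bound, the plan is to assemble step-by-step estimates and then simplify via Lemmas \ref{lem:deg_minimal} and \ref{lem:MVbound}. First I would compute the supports $\cB_0, \dots, \cB_n$ of the auxiliary system $\bfF$; by construction each $\cB_i$ contains $\{0\} \times \Delta_n$, so Lemma \ref{lem:deg_minimal} bounds $\deg_Y$ of the polynomials in the Kronecker resolution by $D_\cB := MV_{n+1}(\cB_0, \dots, \cB_n)$ and $\deg_s$ by $E_\cB := \sum_i MV_{n+1}(\Delta_{n+1}, \cB_0, \dots, \widehat{\cB_i}, \dots, \cB_n)$. Lemma \ref{lem:MVbound} then translates these into $\delta \cD$ and $\delta \cE$ respectively. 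The dominant step is Step 4: by \cite[Proposition 5.13, Proposition 6.1]{JMSW2009} adapted to $\Q(s)$-coefficients, together with Padé reconstruction of the rational coefficients, this costs $O((n^3 N_\cB \log \cQ_\cB + n^{1+\Omega}) M(D_\cB)(M(D_\cB) + M(E_\cB)^2))$ operations in $\Q$. The remaining Steps 5--8 are cheaper gcd, resultant, multipoint evaluation/interpolation and Padé-approximation computations on univariate polynomials whose degrees are bounded by $\delta D_\cB$ in $Y$ and $E_\cB$ in $s$.

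The main obstacle I anticipate is the careful handling of Step 5 and Step 6: one must explain rigorously why taking limits at $s=0$ recovers exactly the algebraic points over $\{s=0\}$ of the $1$-dimensional components $\cZ_{\cP,1}$ (and of no others), which relies both on the well-separating property of the linear form used in \texttt{SparseSolving} and on the fact that $\cZ_{\cP,1}$ is precisely the locus of isolated $\overline{\Q(s)}$-points of the system. Once this correspondence is pinned down, substituting the bounds $N_\cB \le 1 + (n+1)(n+\delta) + N$, $\cQ_\cB \le \delta$, $D_\cB \le \delta \cD$ and $E_\cB \le \delta \cE$ into the dominating term of Step 4 yields the claimed bound $O(n^4 \delta \log(\delta) N \, M(\delta \cD) M(\delta \cE)^2)$.
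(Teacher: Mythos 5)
Your proposal is correct and follows essentially the same route as the paper: it traces the algorithm step by step, reduces correctness to Proposition \ref{prop:defseveralpoints} via the identification of the isolated $\overline{\Q(s)}$-points of $V(\bfS)$ with $\cZ_{\cP,1}$ and the limit-at-$s=0$ argument from \cite[Section 12.5]{bpr}, and obtains the complexity bound by bounding $N_\cB$, $\cQ_\cB$, $D_\cB$, $E_\cB$ through Lemmas \ref{lem:deg_minimal} and \ref{lem:MVbound} exactly as in the paper. The only minor point to tighten is that the computed finite set is in general a strict superset of $\cZ_{\cP,1}$ (an equidimensional curve $\cW\supset\cZ_{\cP,1}$ inside $\cZ_\cP$), which is why the conclusion is the two-sided inclusion for $\cQ$ rather than an exact identification.
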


\begin{remark}\label{rem:DLDoutput}
 If the finite set $\cP\subset V$ is contained in $\bigcup_{j\le k} V_j$, Algorithm \texttt{DiscardLowerDim} computes $\cQ = \cP \cap V_k$.
\end{remark}

 As a consequence, given a finite set of points $\cP \subset V$, we may apply Algorithm \texttt{DiscardLowerDim} recursively to compute, for $k=n-1,\dots, 0$, the set $\cQ_k$ consisting of all points in $\cP$ lying in the equidimensional component $V_k$ and in no equidimensional component of $V$ of a larger dimension.

\medskip
\noindent \hrulefill

\noindent Algorithm \texttt{PointsInDim}

\medskip
\noindent
INPUT: sparse polynomials $f_1, \dots, f_n\in \Q[x_1, \dots, x_n]$ defining a variety $V\subset\A^n$, univariate polynomials $M, v_1, \dots, v_n \in \Q[x_0]$ representing a  finite set of points $\cP\subset V$ in shape-lemma form.

\medskip
\noindent
OUTPUT: for every $k=0, \dots, n-1$, univariate polynomials $M^{(k)}, v_1^{(k)}, \dots, v_n^{(k)}$ providing a shape-lemma representation of the set $\cQ_k = \{ \bfp \in \cP \mid \bfp \in V_k \hbox{ and } \bfp\notin V_j \hbox{ for all } j>k\}$.

\begin{enumerate}\itemsep=0pt
\item Set $M^{[n-1]}:=M$ and, for $i=1,\dots, n$, $v_i^{[n-1]}:= v_i$.

\item For $k=n-1, \dots, 1$ do
\begin{enumerate}\itemsep=0pt
    \item Apply Algorithm \texttt{DiscardLowerDim} to the system $f_1,\dots, f_n$, the geometric resolution $(M^{[k]},\mathbf{v}^{[k]})$, and the integer $k$.
    \\
    The output is a geometric resolution $(M^{(k)}, \mathbf{v}^{(k)})$ in $\Q[x_0]$ of a finite set $\cQ_{k}$.

    \item $M^{[k-1]}:= M^{[k]}/M^{(k)}$.
    \item For $i=1,\dots, n$, $v_i^{[k-1]}:= \text{Rem}( v_i^{[k]},  M^{[k-1]})$.
\end{enumerate}
\item $(M^{(0)}, \mathbf{v}^{(0)}):=(M^{[0]}, \mathbf{v}^{[0]})$.
\end{enumerate}
\noindent \hrulefill
\newpage

\begin{proposition}\label{prop:PID}
Let $f_1, \dots, f_n\in \Q[x_1,\dots, x_n]$ be polynomials supported on $\cA_1, \dots, \cA_n$ in $(\Z_{\ge 0})^n$ that define an algebraic variety $V\subset\A^n$, and let $\cP \subset V$ be a finite set given by a geometric resolution $M, v_1, \dots, v_n$ in $\Q[x_0]$.
\texttt{PointsInDim} is a probabilistic algorithm that, for every $k=0, \dots, n-1$,  computes a geometric resolution of the set
$\cQ_k =\{ \bfp \in \cP \mid \bfp \in V_k \hbox{ and } \bfp\notin V_j \hbox{ for all } j>k\}$ within complexity $O(n^5 \delta \log(\delta) N M(\delta \cD ) M(\delta \cE)^2)$, where $\delta$ is an upper bound for the degrees of the polynomials $M, f_1,\dots, f_n$; $N= \sum_{l=1}^n |\cA_l|$, and $\cD$ and $\cE$ are the parameters defined in Equations \eqref{eq:defD} and \eqref{eq:defE}, respectively.
\end{proposition}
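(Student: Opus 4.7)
The plan is to establish correctness by descending induction on the iteration index $k$, maintaining the invariant that at the beginning of iteration $k$ the pair $(M^{[k]}, \mathbf{v}^{[k]})$ is a shape-lemma representation of the set $\cP^{[k]} := \cP \cap \bigcup_{j \leq k} V_j$. The base case $k = n-1$ is immediate since $V = \bigcup_{j=0}^{n-1} V_j$, so $\cP^{[n-1]} = \cP$ is represented by the input itself.

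For the inductive step, assuming the invariant at $k$, I would apply Proposition \ref{prop:DLD} to $(M^{[k]}, \mathbf{v}^{[k]})$: the call to \texttt{DiscardLowerDim} returns a geometric resolution of a set $\cQ$ with
$(V_k \setminus \bigcup_{j > k} V_j) \cap \cP^{[k]} \subset \cQ \subset \bigcup_{j \geq k} V_j \cap \cP^{[k]}$.
Because $\cP^{[k]} \subset \bigcup_{j \leq k} V_j$, the upper bound collapses to $\cP^{[k]} \cap V_k$, so by Remark \ref{rem:DLDoutput} one gets $\cQ = \cP^{[k]} \cap V_k$. This matches the desired $\cQ_k$: every $\bfp \in \cP$ lying in some $V_j$ with $j > k$ was already extracted in an earlier iteration, hence is absent from $\cP^{[k]}$, while any $\bfp \in \cP \cap V_k$ with no larger-dimensional component remains in $\cP^{[k]}$. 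The updates in Steps 2(b)--2(c) then produce a shape-lemma representation of $\cP^{[k]} \setminus \cQ_k = \cP^{[k-1]}$: Proposition \ref{prop:DLD} guarantees that $M^{(k)}$ divides $M^{[k]}$, so $M^{[k-1]} = M^{[k]}/M^{(k)}$ is the minimal polynomial of the separating linear form on the remaining points, and reducing each $v_i^{[k]}$ modulo $M^{[k-1]}$ restricts the parametrization. After the last iteration, $(M^{[0]}, \mathbf{v}^{[0]})$ represents $\cP \cap V_0 = \cQ_0$, closing the induction.

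For the complexity, the loop performs $n-1$ calls to \texttt{DiscardLowerDim}, each of cost $O(n^4 \delta \log(\delta) N \, M(\delta \cD) M(\delta \cE)^2)$ by Proposition \ref{prop:DLD}, since $\deg M^{[k]} \leq \deg M \leq \delta$ throughout. The polynomial division and the $n$ univariate reductions in Steps 2(b)--2(c) only require $O(n M(\delta)/\log(\delta))$ operations per iteration, which is absorbed into the cost of the \texttt{DiscardLowerDim} call. Summing over the $n-1$ iterations yields the announced bound $O(n^5 \delta \log(\delta) N \, M(\delta \cD) M(\delta \cE)^2)$.

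The only subtle point, though relatively mild, is ensuring that the two-sided inclusion from Proposition \ref{prop:DLD} actually collapses to an equality at every iteration. This rests on preserving the invariant $\cP^{[k]} \subset \bigcup_{j \leq k} V_j$, which in turn follows from the set-theoretic identity $\cP \cap \bigcup_{j \geq k} V_j = \bigsqcup_{j \geq k} \cQ_j$ obtained by routing each point to the largest $V_j$ that contains it; this is exactly what allows Remark \ref{rem:DLDoutput} to be invoked at each step.
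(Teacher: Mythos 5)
Your overall strategy---descending induction on $k$, invoking Proposition \ref{prop:DLD} and Remark \ref{rem:DLDoutput} at each iteration, and summing $n-1$ calls to \texttt{DiscardLowerDim} for the complexity---is the same as the paper's, and your complexity count is correct. However, the invariant you carry through the induction is not the one the algorithm maintains, and this creates a genuine gap. You set $\cP^{[k]} := \cP \cap \bigcup_{j\le k} V_j$, whereas the correct invariant is $\cP^{[k]} = \cP \setminus \bigcup_{j>k} V_j$. These sets differ whenever a point of $\cP$ lies simultaneously in a component of dimension $\le k$ and in one of dimension $>k$ (e.g.\ $\bfp \in V_1 \cap V_2$ at iteration $k=1$), which is exactly the situation the algorithm is designed to handle. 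Under your invariant the two-sided inclusion of Proposition \ref{prop:DLD} does \emph{not} collapse: the condition $\cP^{[k]} \subset \bigcup_{j\le k} V_j$ does not force $\cP^{[k]} \cap \bigcup_{j>k} V_j = \emptyset$, so the upper bound $\big(\bigcup_{j\ge k} V_j\big) \cap \cP^{[k]}$ can strictly contain $V_k \cap \cP^{[k]}$. Moreover, even granting $\cQ = \cP^{[k]} \cap V_k$, this set need not equal $\cQ_k$: a point of $\cP \cap V_k \cap V_{k+1}$ lies in your $\cP^{[k]} \cap V_k$ but must be excluded from $\cQ_k$.

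The fix is small, and you in fact gesture at it both mid-proof (``every $\bfp$ lying in some $V_j$ with $j>k$ was already extracted'') and in your closing paragraph: the statement to prove inductively is precisely $\cP^{[k]} = \cP \setminus \bigcup_{j>k} V_j$. With that invariant one has $\cP^{[k]} \cap \bigcup_{j>k} V_j = \emptyset$, so both bounds in Proposition \ref{prop:DLD} equal $V_k \cap \cP^{[k]} = \cQ_k$; removing $\cQ_k$ (which works because, as you note, the output resolution is with respect to the same separating form, so $M^{(k)}$ divides $M^{[k]}$) then gives $\cP^{[k-1]} = \cP \setminus \bigcup_{j>k-1} V_j$, closing the induction. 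This is exactly the paper's argument; you should simply replace your stated invariant with this one rather than deriving it as an afterthought.
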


\begin{proof}
\emph{Correctness.} For $k=n-1, \dots, 0$, let $\cP_k$ and $\cQ_k$ be the finite sets that represent the geometric resolutions $(M^{[k]}, \mathbf{v}^{[k]})$ and $(M^{(k)}, \mathbf{v}^{(k)})$ respectively.
We claim that, for every $k$, \[\cP_k = \cP \setminus \Big(\bigcup_{j>k} V_j \Big) \ \hbox{and} \ \cQ_k =\{ \bfp \in \cP \mid \bfp \in V_k \hbox{ and } \bfp\notin V_j \hbox{ for all } j>k\}.\]

In Step 1, we have that $\cP_{n-1} = \cP$.

Assume that $\cP_k = \cP \setminus \Big(\bigcup_{j>k} V_j \Big)$ for a given $k$. Since $\cP_k \subset \bigcup_{j\le k} V_j$, in Step 2(a), Algorithm \texttt{DiscardLowerDim} computes a geometric resolution of
$ \cP_k \cap V_k =\{ \bfp \in \cP \mid \bfp \in V_k \hbox{ and } \bfp\notin V_j \hbox{ for all } j>k\} $
(see Remark \ref{rem:DLDoutput}) in the same variable $x_0$ as the input, that is, with respect to the same separating linear form. Then, the polynomial $M^{[k-1]}$ computed in Step 2(b) is the minimal polynomial of the linear form over the set $\cP_k \setminus \cQ_k = \cP \setminus \Big(\bigcup_{j>k-1} V_j \Big) $ and,
therefore, $(M^{[k-1]}, \mathbf{v}^{[k-1]})$ is a geometric resolution of this set.

Finally, note that $\cP_0 = \cP \setminus \Big(\bigcup_{j>0} V_j \Big)$ and so, $(M^{(0)}, \mathbf{v}^{(0)})$ as defined in Step 3 of the algorithm is a geometric resolution of $\{ p\in \cP \mid p\in V_0 \hbox{ and } p\notin V_j \hbox{ for all } j>0\} $.

\medskip

\noindent \emph{Complexity analysis.} According to Proposition \ref{prop:DLD}, for every $k=n-1, \dots, 1$, the complexity of Step 2(a) is bounded by $O(n^4 \delta \log(\delta) N M(\delta \cD ) M(\delta \cE)^2)$. This dominates the overall complexity of Step 2, since the remaining computations amount to $n+1$ polynomial divisions involving univariate polynomials of degrees at most $\delta$.
Therefore, the overall complexity of Algorithm \texttt{PoinstInDim} is $O(n^5 \delta \log(\delta) N M(\delta \cD ) M(\delta \cE)^2)$.
\end{proof}

\begin{remark}
The output of Algorithm \texttt{PointsInDim} for $k=0$ is exactly the subset of the points $\bfp \in \cP$ that are isolated points of the variety $V$.
\end{remark}

\subsection{Computation of witness point sets}

Now we are ready to present an algorithm that computes witness point sets of the equidimensional components of a variety defined by a system of sparse polynomials.

Our procedure applies Algorithm \texttt{PointsInEquidComps} from \cite{HJS2013} as a subroutine. Given sparse polynomials $f_1, \dots, f_n \in \Q[x_1, \dots, x_n]$, this algorithm takes generic linear varieties $\mathbb{L}_1, \dots, \mathbb{L}_{n-1}$, with $\dim(\mathbb{L}_k) = n-k$ for every $k$, and  computes $n$ geometric resolutions $R^{(0)}, \dots, R^{(n-1)}$ such that, for $k=0,\dots, n-1$, $R^{(k)}$ represents a finite set $\mathcal{R}_k\subset \A^n$ satisfying
\begin{equation}\label{eq:PointsInEC}
V_0 \subset \mathcal{R}_0 \subset V  \quad \hbox{ and } \quad  V_k \cap \mathbb{ L}_k \subset \mathcal{R}_k \subset \Big( \bigcup_{j\ge k} V_j\Big) \cap \mathbb{L}_k \
\hbox{for } k=1,\dots, n-1.
\end{equation}
In order to recover $V_0$ and the witness sets $V_k \cap \mathbb{ L}_k$ of the equidimensional components of positive dimension of $V$, we apply Algorithm \texttt{PointsInDim}.

\bigskip
\noindent \hrulefill

\noindent Algorithm \texttt{WitnessPoints}

\medskip
\noindent
INPUT: sparse polynomials $f_1, \dots, f_n\in \Q[x_1,\dots,x_n]$ defining a variety $V\subset\A^n$.

\medskip
\noindent
OUTPUT: for every $k=0, \dots, n-1$, univariate polynomials $M^{(k)}, v_1^{(k)}, \dots, v_n^{(k)}$ providing a shape-lemma representation of a witness point set of the equidimensional component $V_k$ of $V$.

\begin{enumerate}
    \item Apply Algorithm \texttt{PointsInEquidComps} to the system $\mathbf{f}=(f_1,\dots, f_n)$.

    The output is a family of geometric resolutions $R^{(k)}$, for $k=0, \dots, n-1$.

    \item For $k=0,\dots, n-1$:
    \begin{enumerate}
        \item Apply Algorithm \texttt{PointsInDim} to the polynomials $f_1,\dots, f_n$ and the geometric resolution $R^{(k)}$
        to obtain a family  of geometric resolutions $(M^{(k, h)}, \mathbf{v}^{(k, h)})$, $h=0,\dots, n-1$.
       \item $(M^{(k)}, \mathbf{v}^{(k)}):=(M^{(k, k)}, \mathbf{v}^{(k, k)})$.
    \end{enumerate}
\end{enumerate}
\noindent \hrulefill

\begin{theorem}\label{thm: main}
Let $f_1, \dots, f_n\in \Q[x_1,\dots, x_n]$ be polynomials supported on  $\cA_1, \dots, \cA_n\subset (\Z_{\ge 0})^n$ that define an algebraic variety $V\subset\A^n$. \texttt{WitnessPoints} is a probabilistic algorithm that computes geometric resolutions $(M^{(k)}, \mathbf{v}^{(k)})$, for $k=0, \dots, n-1$, such that $(M^{(0)}, \mathbf{v}^{(0)})$ represents the set of all isolated points of $V$ and, for every $k=1,\dots, n-1$, $(M^{(k)}, \mathbf{v}^{(k)})$ represents a set of witness points of the equidimensional component of dimension $k$ of $V$.

The complexity of the algorithm is of order $O(n^6  N\,\cD \log(\cD)  M(\cD^2) M(\cD \cE)^2)$, where
$N= \sum_{l=1}^n |\cA_l|$, $\cD =  MV_n(\cA_1 \cup \Delta_n, \dots, \cA_n\cup \Delta_n)$ and $\cE=MV_n(\cA_1 \cup \Delta_n, \dots, \cA_n\cup \Delta_n) +\sum_{i=1}^n MV_n(\Delta_n, \cA_1 \cup \Delta_n, \dots, \cA_{i-1} \cup \Delta_n, \cA_{i+1} \cup \Delta_n, \dots, \cA_n\cup \Delta_n)$.
\end{theorem}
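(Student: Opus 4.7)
The plan is to prove correctness by tracing, for each index $k$, the finite sets represented by the intermediate geometric resolutions, and then to bound the overall complexity by adding the contributions of Step 1 and the $n$ invocations of \texttt{PointsInDim} in Step 2, using a uniform degree estimate coming from the degree bound in \cite[Theorem 16]{HJS2013}.

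For correctness, I fix $k$ and let $\cR_k$ denote the finite set represented by $R^{(k)}$ produced in Step 1. By \eqref{eq:PointsInEC}, either $V_0 \subseteq \cR_0 \subseteq V$ (for $k=0$) or $V_k \cap \mathbb{L}_k \subseteq \cR_k \subseteq \big(\bigcup_{j\ge k} V_j\big) \cap \mathbb{L}_k$ (for $k\ge 1$). Applying Proposition \ref{prop:PID} in Step 2(a) to the geometric resolution $R^{(k)}$ yields, for every $h$, a geometric resolution of
\[\cQ_{k,h} = \{\bfp \in \cR_k \mid \bfp \in V_h \text{ and } \bfp \notin V_j \text{ for all } j>h\}.\]
Specializing at $h=k$ and combining with the inclusion $\cR_k \subseteq \bigcup_{j\ge k} V_j$ (respectively $\cR_0 \subseteq V$), I would argue that every point of $\cQ_{k,k}$ lies in $V_k$; conversely, since $V_k \cap V_j$ is a proper subvariety of $V_k$ for every $j>k$, the genericity of $\mathbb{L}_k$ implies $V_k \cap \mathbb{L}_k$ avoids $\bigcup_{j>k} V_j$, whence $V_k \cap \mathbb{L}_k \subseteq \cQ_{k,k}$. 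Putting both inclusions together gives $\cQ_{k,k} = V_k \cap \mathbb{L}_k$, namely a witness set of $V_k$. For $k=0$ the same reasoning, together with the fact that an isolated point of $V$ cannot lie in any positive-dimensional irreducible component, yields $\cQ_{0,0} = V_0$.

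For the complexity, I would first invoke the complexity bound for \texttt{PointsInEquidComps} from \cite{HJS2013}, which fits inside the claimed global bound. The main contribution comes from the $n$ iterations in Step 2(a). The crucial uniform degree estimate is $|\cR_k| \le \deg(V) \le \cD$, obtained from \cite[Theorem 16]{HJS2013} together with the inclusion $\cR_k \subseteq \bigcup_{j\ge k} V_j$; since the input polynomials $f_1,\dots,f_n$ also have degrees bounded (up to constants) by parameters controlled by $\cD$, we may take $\delta \le \cD$ in Proposition \ref{prop:PID}. Each iteration therefore costs $O(n^5 \cD \log(\cD)\, N\, M(\cD^2)\, M(\cD \cE)^2)$, and multiplying by $n$ yields the announced bound $O(n^6 N\, \cD \log(\cD)\, M(\cD^2)\, M(\cD \cE)^2)$.

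The main obstacle is the geometric verification that $\cQ_{k,k}$ coincides with the witness set $V_k \cap \mathbb{L}_k$. The upper inclusion $\cQ_{k,k} \subseteq V_k \cap \mathbb{L}_k$ is immediate from the specification of \eqref{eq:PointsInEC} combined with Proposition \ref{prop:PID}, but the reverse inclusion requires the subtle point that the generic codimension-$k$ affine subspace $\mathbb{L}_k$ used by \texttt{PointsInEquidComps} has to avoid, for every $j>k$, the proper subvariety $V_k \cap V_j \subsetneq V_k$. Once this genericity condition, inherited from the generic choices performed inside \texttt{PointsInEquidComps}, is singled out and combined with the genericity assumptions (H1)--(H4) used by \texttt{PointsInDim}, the argument reduces to a chain of specifications and the remaining work is the routine degree bookkeeping that feeds into Proposition \ref{prop:PID}.
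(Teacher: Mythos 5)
Your overall strategy coincides with the paper's: Step~1 provides witness supersets $\cR_k$ satisfying \eqref{eq:PointsInEC}, Proposition~\ref{prop:PID} extracts $\cQ_{k,k}$, and the identity $\cQ_{k,k}=V_k\cap\mathbb{L}_k$ follows because a generic $\mathbb{L}_k$ of codimension $k$ misses every $V_k\cap V_j$ with $j>k$ (these having dimension $<k$). That part of your argument is exactly the one in the paper and is correct.

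There is, however, one step in your complexity analysis that does not hold as stated: you deduce the uniform degree estimate $|\cR_k|\le\deg(V)\le\cD$ ``from \cite[Theorem 16]{HJS2013} together with the inclusion $\cR_k\subseteq\bigcup_{j\ge k}V_j$.'' This inclusion gives no cardinality bound at all: $\cR_k$ is a \emph{finite} subset of $\big(\bigcup_{j\ge k}V_j\big)\cap\mathbb{L}_k$, and for $j>k$ the sets $V_j\cap\mathbb{L}_k$ are positive-dimensional, so the ambient set is in general infinite and the degree of $V$ does not control how many points \texttt{PointsInEquidComps} may have placed in $\cR_k$. The bound $\deg(M^{(k)})=|\cR_k|\le\cD$ is true, but for a different reason: the paper obtains it from the internal construction of \texttt{PointsInEquidComps}, whose output sets arise by deformation from the solution set of a \emph{generic} sparse system supported on $\cA_1\cup\Delta_n,\dots,\cA_n\cup\Delta_n$, hence have at most $MV_n(\cA_1\cup\Delta_n,\dots,\cA_n\cup\Delta_n)=\cD$ points. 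You need this (or some equivalent appeal to the subroutine's specification) to legitimately take $\delta\le\cD$ in Proposition~\ref{prop:PID}; once that is repaired, the rest of your bookkeeping (cost of Step~1 absorbed by Step~2, $n$ iterations each of cost $O(n^5\cD\log(\cD)\,N\,M(\cD^2)M(\cD\cE)^2)$) matches the paper and yields the stated bound.
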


\begin{proof}
In Step 1, the algorithm computes a family of geometric resolutions $R^{(k)}$, for $k=0, \dots, n-1$,  representing in shape-lemma form finite sets $\cR_k$ of the variety $V$ that satisfy \eqref{eq:PointsInEC}.

Then, in Step 2, for every $k$, Algorithm \texttt{PointsInDim} with input $f_1,\dots, f_n$ and $R^{(k)}$ computes $n$ geometric resolutions $(M^{(k, h)}, \mathbf{v}^{(k, h)})$, for $h=n-1, \dots, 0$, providing  shape-lemma representation of the sets $\cQ_{k, h} = \{ \bfp \in \cR_k \mid \bfp \in V_h \hbox{ and } \bfp \notin V_j \hbox{ for all } j>h\}$. Now, for a generic linear variety $\mathbb{L}_k$ of codimension $k$, we have that $\mathbb{L}_k \cap V_k\cap V_j= \emptyset$, since $\dim(V_k\cap V_j)<k$; in particular,  every point $\bfp \in V_k\cap \mathbb{L}_k $ satisfies $\bfp\notin V_j$ for all $j>k$. Then, $\cQ_{k, k} = V_k\cap \mathbb{L}_k$ and so, $(M^{(k)}, \mathbf{v}^{(k)}):=(M^{(k, k)}, \mathbf{v}^{(k, k)})$ is a geometric resolution of a witness set of $V_k$.

Following \cite[Section 4.2]{HJS2013}, as $\deg(f_j) \le MV_n(\cA_j \cup \Delta_n,\Delta_n, \dots, \Delta_n) \le \cD$ for $j=1,\dots, n$, we can estimate the complexity of Step 1 in terms of the parameters $n,\cD$, and $\cE$ as $O(n^4 N \log(\cD)  M(\cD) M(\cE)+ n^2 N  M(\cD^2))$.

Each of the geometric resolutions $R^{(k)}$ computed in Step 1 is given by univariate polynomials of degrees at most $\cD$, since they are obtained by deformation of the solution set of a generic sparse polynomial system with supports $\cA_1 \cup \Delta_n, \dots, \cA_n\cup \Delta_n$. Therefore, by Proposition \ref{prop:PID}, the overall complexity of Step 2 is of order $O(n^6 N \, \cD \log(\cD)  M(\cD^2) M(\cD\cE)^2)$, which dominates the complexity of the previous step.
\end{proof}

\section{Conclusion}

We presented a new probabilistic algorithm that provides a description of the equidimensional decomposition of the affine algebraic variety defined by an arbitrary sparse system.
Each of the equidimensional components of the variety is characterized by means of a witness set, that is, a finite set consisting of its intersection with a generic linear variety of complementary dimension.

The use of deformation techniques and the codification of polynomials in sparse form and with straight-line programs enabled us to obtain a complexity which is polynomial in combinatorial invariants associated to the supports of the equations.

\end{document}